\theoremstyle{plain}
\newtheorem{theorem}{Theorem}
\newtheorem{corollary}[theorem]{Corollary}
\theoremstyle{definition}
\newtheorem{remark}[theorem]{Remark}
\numberwithin{equation}{section}
\numberwithin{theorem}{section}
\numberwithin{table}{section}
\newcommand{\ZZ}{\mathbb{Z}}
\newcommand{\QQ}{\mathbb{Q}}
\newcommand{\KK}{\mathcal{K}}
\newcommand{\KE}{\widehat{\mathcal{K}}}
\newcommand{\D}{\mathbf{D}}
\newcommand{\DD}{\mathbb{D}}
\newcommand{\DB}{\mathfrak{D}}
\newcommand{\DDiv}{\mathbb{D}^\top}
\newcommand{\DDivn}{\mathbb{D}^\bot}
\newcommand{\DDivcomp}{\mathbb{D}^{\top^{\scriptstyle\star}}}
\newcommand{\CN}{\mathcal{C}}
\newcommand{\CP}{\mathcal{C}\mspace{1mu}'}
\newcommand{\CPnum}{C\mspace{1mu}'}
\newcommand{\SF}{\mathbb{S}}
\newcommand{\SH}{\mathcal{S}}
\newcommand{\SHeven}{\mathcal{S}_{\text{\rm even}}}
\newcommand{\TS}{\mathcal{T}}
\newcommand{\GN}{\mathbf{G}}
\newcommand{\HN}{\mathbf{H}}
\newcommand{\ON}{\mathbf{O}}
\newcommand{\PN}{\mathbf{P}}
\DeclarePairedDelimiter{\floor}{\lfloor}{\rfloor}
\DeclareMathOperator{\denom}{denom}
\DeclareMathOperator{\lcm}{lcm}
\DeclareMathOperator{\rad}{rad}
\DeclareMathOperator{\pval}{v\mspace{-1.5mu}}
\newcommand{\mids}{\,\mid\,}
\newcommand{\nmids}{\,\nmid\,}
\newcommand{\iffq}{\; \iff \;}
\newcommand{\impliesq}{\; \implies \;}
\newcommand{\andq}{\quad \text{and} \quad}
\newcommand{\withq}{\quad \text{with} \quad}
\newcommand{\phrase}[1]{``#1"}
\newcommand{\set}[1]{\left\{#1\right\}}
\begin{document}

\title[On Carmichael numbers and Bernoulli polynomials]
{On Carmichael and polygonal numbers, Bernoulli polynomials,\\
and sums of base-$\boldsymbol{P}$ digits}
\author{Bernd C. Kellner and Jonathan Sondow}
\address{G\"ottingen, Germany}
\email{bk@bernoulli.org}
\address{New York, USA}
\email{jsondow@alumni.princeton.edu}
\subjclass[2020]{11B68 (Primary), 11B83 (Secondary)}
\keywords{Carmichael numbers, Bernoulli numbers and polynomials, Kn\"odel numbers,
polygonal numbers, denominator, sum of base-$p$ digits, $p$-adic valuation}

\begin{abstract}
We give a new characterization of the set $\mathcal{C}$ of Carmichael numbers
in the context of $p$-adic theory, independently of the classical results of
Korselt and Carmichael. The characterization originates from a surprising
link to the denominators of the Bernoulli polynomials via the
sum-of-base-$p$-digits function. More precisely, we show that such a
denominator obeys a triple-product identity, where one factor is connected
with a $p$-adically defined subset $\mathcal{S}$ of the squarefree integers
that contains $\mathcal{C}$. This leads to the definition of a new subset
$\mathcal{C}'$ of $\mathcal{C}$, called the ``primary Carmichael numbers''.
Subsequently, we establish that every Carmichael number equals an explicitly
determined polygonal number. Finally, the set $\mathcal{S}$ is covered by
modular subsets $\mathcal{S}_d$ ($d \geq 1$) that are related to the Kn\"odel
numbers, where $\mathcal{C} = \mathcal{S}_1$ is a special case.
\end{abstract}

\maketitle


\section{Introduction}
\label{sec:intro}

A composite positive integer $m$ is called a \emph{Carmichael number}
if the congruence
\begin{equation} \label{eq:Fermat-congr}
  a^{m-1} \equiv 1 \pmod{m}
\end{equation}
holds for all integers $a$ coprime to $m$
(see \cite[Sec.\,A13]{Guy:2004}, \cite[Chap.\,2, Sec.\,IX]{Ribenboim:2012}).
Clearly, if $m$ were a prime, then this congruence would be valid by
Fermat's little theorem.

Let \phrase{number} mean \phrase{positive integer} unless otherwise specified,
and let $p$ always denote a prime. A first result on Carmichael numbers is the
following criterion (for a proof, see \cite{Conrad:2016} or
\cite[p.\,134]{Crandall&Pomerance:2005}).

\begin{theorem}[Korselt's criterion \cite{Korselt:1899} (1899)]
\label{thm:Korselt}
A composite number $m$ is a Carmichael number if and only if $m$ is squarefree
and every prime divisor $p$ of $m$ satisfies \mbox{$p - 1 \mid m - 1$}.
\end{theorem}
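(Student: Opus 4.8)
The plan is to prove the two implications separately, in each case using the Chinese Remainder Theorem (CRT) to pass between the congruence modulo $m$ and the congruences modulo the prime-power factors of $m$. The only external inputs required are Fermat's little theorem and the existence of primitive roots modulo a prime and modulo the square of a prime.

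For the easier reverse implication, suppose $m = p_1 \cdots p_k$ is squarefree and composite (so $k \geq 2$) with $p_i - 1 \mid m - 1$ for every $i$. Given $a$ coprime to $m$, Fermat's little theorem gives $a^{p_i - 1} \equiv 1 \pmod{p_i}$, and raising this to the power $(m-1)/(p_i-1)$ yields $a^{m-1} \equiv 1 \pmod{p_i}$ for each $i$. Since the $p_i$ are distinct, CRT assembles these congruences into $a^{m-1} \equiv 1 \pmod m$, so $m$ satisfies \eqref{eq:Fermat-congr} and is a Carmichael number.

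For the forward implication, assume $m$ is a Carmichael number. I would first establish squarefreeness by contradiction: if $p^2 \mid m$, pick a primitive root $g$ modulo $p^2$, whose order $p(p-1)$ is divisible by $p$, and use CRT to produce an integer $a$ with $a \equiv g \pmod{p^2}$ and $a \equiv 1$ modulo the part of $m$ coprime to $p$, so that $a$ is coprime to $m$. The defining congruence then forces $a^{m-1}\equiv 1\pmod{p^2}$, hence the order $p(p-1)$ of $g$ divides $m-1$; in particular $p \mid m-1$, contradicting $p \mid m$. Thus $m$ is squarefree. The divisibility condition follows by the same CRT device applied one prime at a time: for each $p \mid m$, choose $a$ congruent to a primitive root modulo $p$ and to $1$ modulo $m/p$; then $a^{m-1}\equiv 1\pmod p$ says that the order $p-1$ of the primitive root divides $m-1$, i.e.\ $p-1 \mid m-1$.

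The computations are all routine; the one point deserving care is the squarefree step, where I must guarantee that the chosen local element has order divisible by $p$. Using a full primitive root modulo $p^2$ secures this uniformly, including the case $p = 2$ (where a primitive root modulo $4$ still has even order $p(p-1)=2$), so no separate treatment of the prime $2$ is needed.
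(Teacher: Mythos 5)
Your proof is correct. The paper does not supply its own proof of Korselt's criterion but defers to \cite{Conrad} and \cite[p.~134]{Crandall&Pomerance:2005}, and your argument---Fermat's little theorem plus CRT for the sufficiency, and primitive roots modulo $p$ and $p^2$ combined with CRT for the necessity, with the $p=2$ case correctly absorbed---is essentially the standard proof found in those references.
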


Korselt did not give any examples of such numbers, while Carmichael succeeded
in determining the first ones, e.g.,
\[
  561 = 3 \cdot 11 \cdot 17, \quad 1105 = 5 \cdot 13 \cdot 17, \andq
  1729 = 7 \cdot 13 \cdot 19.
\]
Apparently unaware of Korselt's result, Carmichael showed the following properties.

\begin{theorem}[Carmichael \cite{Carmichael:1910,Carmichael:1912} (1910,1912)]
\label{thm:Carmichael}
Every Carmichael number $m$ is odd and squarefree and has at least three prime
factors. If $p$ and~$q$ are prime divisors of $m$, then
\[
  \textup{(i)} \quad p - 1 \mid m - 1, \quad
  \textup{(ii)} \quad p - 1 \mid \frac{m}{p} - 1, \quad
  \textup{(iii)} \quad p \nmid q - 1.
\]
\end{theorem}

An easy consequence of part~(ii) is that (see \cite{Conrad:2016})
\begin{equation} \label{eq:p-sqrt}
  p < \sqrt{m}.
\end{equation}

Denote the set of Carmichael numbers by
\[
  \CN = \set{561, 1105, 1729, 2465, 2821, 6601, 8911, 10585, 15841, \dotsc}.
\]

In $1994$ Alford, Granville, and Pomerance \cite{AGP:1994} proved that $\CN$ is
infinite, i.e., \emph{infinitely many Carmichael numbers exist}. More precisely,
they showed that if $C(x)$ denotes the number of Carmichael numbers less than $x$,
then $C(x) > x^{2/7}$ for sufficiently large $x$. This was improved by
Harman~\cite{Harman:2008} in $2008$ to
\[
  C(x) > x^{1/3} \quad \text{for all large\ } x.
\]

In the other direction, Erd\H{o}s \cite{Erdos:1956}  in 1956 improved a result
of Kn\"odel \cite{Knoedel:1953b} to show that
\[
  C(x) < x^{1-c\log\log\log x / \log\log x} \quad \text{for all large\ } x,
\]
where $c>0$ is a constant. For which estimate is closer to the true asymptotic
for $C(x)$, see Granville and Pomerance's discussion in \cite{Granville&Pomerance:2002}
(see also \cite[Chap.\,4, Sec.\,VIII]{Ribenboim:2012}).

The purpose of the present paper is to give a new characterization of the
Carmichael numbers in the context of $p$-adic theory, independently of the
results of Korselt and Carmichael in Theorems~\ref{thm:Korselt} and~\ref{thm:Carmichael}.
The characterization originates from a surprising link to the denominators of
the Bernoulli polynomials via the sum-of-base-$p$-digits function~$s_p$.

The link is introduced in Sections~\ref{sec:Carmichael} and~\ref{sec:Bernoulli}.
Section~\ref{sec:Carmichael} also introduces a $p$-adically defined set of
squarefree integers $\SH \supset \CN$, and the subset of
\phrase{primary Carmichael numbers} $\CP \subset \CN$.
Section~\ref{sec:polygonal} establishes that every Carmichael number equals
an explicitly determined polygonal number.

Subsequently, Sections~\ref{sec:proofs-1}, \ref{sec:proofs-2}, and~\ref{sec:proofs-3}
contain the postponed proofs of the results in Sections~\ref{sec:Carmichael},
\ref{sec:Bernoulli}, and~\ref{sec:polygonal}, respectively.

Finally, in Section~\ref{sec:modular} the set $\SH$ is covered by \emph{modular}
subsets $\SH_d$ for $d = 1, 2, 3, \dotsc$, providing a modular generalization
of $\CN = \SH_1$. It turns out that each $\SH_d$ is contained in a certain
superset $\KE_d$ of the so-called $d$-Kn\"odel numbers $\KK_d$.


\section{Carmichael numbers and squarefree integers}
\label{sec:Carmichael}

Define $\SF$ to be the set of squarefree integers greater than $1$:
\[
  \SF = \set{2,3,5,6,7,10,\dotsc}.
\]
Denoting by $s_p(n)$ the \emph{sum of the base-$p$ digits of~$n$},
we further define two subsets of $\SF$, namely,
\begin{align*}
  \SH &:= \set{m \in \SF \,:\, p \mid m \impliesq s_p(m) \geq p}\\
\shortintertext{and}
  \CP &:= \set{m \in \SF \,:\, p \mid m \impliesq s_p(m) = p}.
\end{align*}
Note that $\CP$ is a subset of $\SH$. One computes that
\begin{align*}
  \SH &= \set{231, 561, 1001, 1045, 1105, 1122, 1155, 1729, 2002, \dotsc}\\
\shortintertext{and}
  \CP &= \set{1729, 2821, 29341, 46657, 252601, 294409, 399001, \dotsc}.
\end{align*}

We will show that $\CP \subset \CN$ (see Theorem~\ref{thm:main}).
If $m \in \CP$, then $s_p(m) = p$ for all primes $p \mid m$, so we call
$m$ a \emph{primary} Carmichael number (hence the notation $\CP$,
meaning \phrase{$\CN$ prime}). The first one is $1729$, Ramanujan's famous
\phrase{taxicab} number, defined by him as \phrase{the smallest number
expressible as the sum of two [positive] cubes in two different ways}
(see \cite[p.\,12]{Hardy:1940}). The first primary Carmichael number not
congruent to $1$ modulo $4$ is
\[
  1152271 \equiv 3 \pmod{4},
\]
while the first element of $\CP$ with more than three prime factors is
\[
  10606681 = 31 \cdot 43 \cdot 73 \cdot 109.
\]

We can now state our first main results. The following one extends parts
of Theorem~\ref{thm:Carmichael} to a larger set.

\begin{theorem} \label{thm:main}
There are the strict inclusions
\[
  \CP \subset \CN \subset \SH \subset \SF.
\]
Moreover, for any $m \in \SH$ each prime factor $p$ satisfies the
property~\eqref{eq:p-sqrt} that $p < \sqrt{m}$. In particular, $m$ must
have at least three (respectively, four) prime factors, if $m$ is odd
(respectively, even).
\end{theorem}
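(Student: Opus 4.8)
The bridge between the digit condition and divisibility is the elementary congruence $m \equiv s_p(m) \pmod{p-1}$, valid for every prime $p$ since $p^i \equiv 1 \pmod{p-1}$. The plan is to read all four sets through this congruence, invoking Korselt's criterion (Theorem~\ref{thm:Korselt}) for the two central inclusions. First observe that for squarefree $m$ with $p \mid m$ the divisibility $p-1 \mid m-1$ is equivalent to $s_p(m) \equiv 1 \pmod{p-1}$; and if in addition $m$ is not the prime $p$ itself (so that $s_p(m) > 1$), then the least value in the residue class $1 \bmod (p-1)$ above $1$ is $p$, forcing $s_p(m) \geq p$. Applying this to each prime factor of a Carmichael number $m$, which is squarefree with at least three prime factors by Theorem~\ref{thm:Carmichael}, yields $\CN \subset \SH$. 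Conversely, if $m \in \CP$ then $s_p(m) = p \equiv 1 \pmod{p-1}$ for every $p \mid m$, so $p-1 \mid m-1$; since $s_p(m) = p$ shows $m$ is not a prime power and hence composite, Korselt gives $\CP \subset \CN$. The inclusion $\SH \subset \SF$ holds by definition.

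For strictness I would supply explicit witnesses. The number $6 = 2 \cdot 3$ lies in $\SF \setminus \SH$ because $s_3(6) = 2 < 3$. The number $231 = 3 \cdot 7 \cdot 11$ lies in $\SH$, since $s_3(231) = 7$, $s_7(231) = 9$, and $s_{11}(231) = 11$, yet it is not Carmichael because $7 - 1 \nmid 231 - 1$, so $231 \in \SH \setminus \CN$. Finally $561 \in \CN$ satisfies $s_3(561) = 7 \neq 3$, so $561 \in \CN \setminus \CP$. This settles all three strict inclusions.

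For the size bound, fix $m \in \SH$ with $p \mid m$ and suppose $p \geq \sqrt{m}$. Squarefreeness excludes $m = p^2$, so $m < p^2$ and $m$ has at most two base-$p$ digits; since $p \mid m$ the units digit is $0$, leaving $m = a\,p$ with $1 \leq a \leq p-1$, whence $s_p(m) = a < p$ contradicts $m \in \SH$. Hence every prime factor obeys $p < \sqrt{m}$, which is~\eqref{eq:p-sqrt}. Writing $m = p_1 \cdots p_r$, the bound forces $m = \prod_i p_i < \prod_i \sqrt{m} = m^{r/2}$, so $r > 2$; thus $m$ has at least three prime factors, settling the odd case.

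The even case is where the genuine work lies, since the crude product bound still only gives $r \geq 3$. I would argue by contradiction: an even $m \in \SH$ with exactly three prime factors must have the form $m = 2pq$ with $2 < p < q$, and $q < \sqrt{m}$ forces $q < 2p$. Expanding in base $q$, the inequalities $q < 2p < 2q$ give $m = q^2 + (2p-q)\,q$, so $s_q(m) = 1 + (2p - q)$; here $2p < 2q$ yields $s_q(m) \leq q$, and equality is excluded by parity because $2p = 2q-1$ is impossible, so $s_q(m) < q$, contradicting $m \in \SH$. Therefore an even element of $\SH$ has at least four prime factors. I expect this base-$q$ digit computation to be the main obstacle: unlike the other assertions it cannot be extracted from the size estimate $p < \sqrt{m}$ alone, but genuinely combines the defining inequality $s_q(m) \geq q$ with the parity of the factor $2$.
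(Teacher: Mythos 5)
Your proposal is correct and follows essentially the same route as the paper: the congruence $m \equiv s_p(m) \pmod{p-1}$ combined with Korselt's criterion for the inclusions $\CP \subset \CN \subset \SH$, the base-$p$ expansion $m = a_0 p + a_1 p^2$ for the bound $p < \sqrt{m}$, and the same parity contradiction (an even digit sum versus $2q-1$) to rule out three prime factors in the even case. The only cosmetic difference is that you exhibit explicit strictness witnesses ($6$, $231$, $561$) where the paper points to its computed examples.
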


Theorem~\ref{thm:main} leads to a new criterion for the Carmichael numbers.

\begin{theorem} \label{thm:criterion}
We have the characterization
\[
  \CN = \set{m \in \SH \,:\, p \mid m \impliesq s_p(m) \equiv 1 \pmod{p-1}}.
\]
In other words, an integer $m > 1$ is a Carmichael number if and only if $m$
is squarefree and each of its prime divisors $p$ satisfies both
\[
  s_p(m) \geq p \andq s_p(m) \equiv 1 \pmod{p-1}.
\]
From this characterization it follows directly that $m$ is odd and has at least
three prime factors, each less than $\sqrt{m}$.
\end{theorem}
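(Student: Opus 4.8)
The plan is to reduce the digit-sum condition to the divisibility condition $p-1 \mid m-1$ by means of the elementary congruence
$$
  s_p(m) \equiv m \pmod{p-1},
$$
valid for every $m \in \SF$ and every prime $p$. This follows at once on writing $m = \sum_i d_i p^i$ in base $p$ and using $p^i \equiv 1 \pmod{p-1}$, so that $m \equiv \sum_i d_i = s_p(m)$. Consequently, for any prime $p \mid m$ the condition $s_p(m) \equiv 1 \pmod{p-1}$ is equivalent to $m \equiv 1 \pmod{p-1}$, i.e., to $p - 1 \mid m - 1$. Since Theorem~\ref{thm:main} already supplies the inclusion $\CN \subset \SH$, together with squarefreeness and the bounds $p < \sqrt{m}$, it remains only to show that, for $m \in \SH$, membership in $\CN$ is equivalent to having $p - 1 \mid m - 1$ for all $p \mid m$.

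For the forward inclusion, let $m \in \CN$, so $m \in \SH$ by Theorem~\ref{thm:main}. Fix a prime $p \mid m$ and a primitive root $g$ modulo $p$. By the Chinese Remainder Theorem choose $a$ with $a \equiv g \pmod{p}$ and $a \equiv 1 \pmod{m/p}$, so that $a$ is coprime to $m$. The Carmichael congruence $a^{m-1} \equiv 1 \pmod{m}$ reduces modulo $p$ to $g^{m-1} \equiv 1 \pmod{p}$, whence $p - 1 \mid m - 1$ because $g$ has order $p - 1$. By the displayed congruence this gives $s_p(m) \equiv 1 \pmod{p-1}$ for every prime $p \mid m$.

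For the reverse inclusion, suppose $m \in \SH$ satisfies $s_p(m) \equiv 1 \pmod{p-1}$, hence $p - 1 \mid m - 1$, for all $p \mid m$. Since $\SH \subset \SF$, the integer $m$ is squarefree, and by Theorem~\ref{thm:main} it has at least three prime factors, so it is composite. Writing $m = p_1 \cdots p_k$ and taking any $a$ coprime to $m$, Fermat's little theorem gives $a^{p_i - 1} \equiv 1 \pmod{p_i}$, and $p_i - 1 \mid m - 1$ upgrades this to $a^{m-1} \equiv 1 \pmod{p_i}$ for each $i$; the Chinese Remainder Theorem then yields $a^{m-1} \equiv 1 \pmod{m}$, so $m \in \CN$. (Alternatively, once the divisibility condition is in hand one may simply invoke Korselt's criterion, Theorem~\ref{thm:Korselt}; the self-contained argument just given keeps the proof independent of it.) The ``in other words'' restatement is now merely the definition of $\SH$ unpacked, namely squarefreeness together with $s_p(m) \geq p$ for each $p \mid m$.

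Finally, the stated consequences are immediate. Squarefreeness, the presence of at least three prime factors, and the bounds $p < \sqrt{m}$ all come directly from Theorem~\ref{thm:main}. Oddness follows by picking any odd prime factor $p \mid m$ --- one exists since there are at least three --- and noting that $p - 1$ is even, so $m \equiv 1 \pmod{p-1}$ forces $m \equiv 1 \pmod{2}$. I expect no serious obstacle here: the only point requiring care is that the prime $p = 2$ contributes the vacuous congruence modulo $p - 1 = 1$ and so imposes no constraint, which is why oddness must be extracted from an odd prime factor rather than from $2$ itself.
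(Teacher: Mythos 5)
Your proof is correct and follows essentially the same route as the paper: the key step in both is the congruence $s_p(m) \equiv m \pmod{p-1}$, which converts the digit-sum condition into Korselt's divisibility condition $p-1 \mid m-1$, and oddness is extracted from an odd prime factor in exactly the same way. The only difference is that where the paper simply invokes Korselt's criterion (Theorem~\ref{thm:Korselt}) once the divisibility condition is in hand, you re-prove both of its directions via primitive roots, Fermat's little theorem, and the Chinese Remainder Theorem --- a harmless, slightly more self-contained variant.
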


Unlike the criterion of Korselt, that in Theorem~\ref{thm:criterion} does not
\emph{assume} compositeness. Indeed, all results of Theorems~\ref{thm:main}
and~\ref{thm:criterion} are deduced only from properties of the function $s_p$.
In this vein, we can even sharpen the consequence of Theorems~\ref{thm:main}
and~\ref{thm:criterion} that $p < \sqrt{m}$ if $p \mid m$.

\begin{theorem} \label{thm:estimate}
For certain subsets $\TS \subseteq \SH$, we have the sharp estimate
\[
  p \leq \alpha_\TS \, \sqrt{m} \quad (m \in \TS\!, \, p \mid m)
\]
with
\[
  \alpha_\TS = 1 \Big/ \sqrt{2 - \frac{1}{q}} =
  \left\{
    \begin{array}{lll}
      0.7237\dotsc \!, & q = 11,    & \text{if $\TS = \SH$},\\
      0.7177\dotsc \!, & q = 17,    & \text{if $\TS = \CN$},\\
      0.7071\dotsc \!, & q = 66337, & \text{if $\TS = \CP$},
    \end{array}
  \right.
\]
and
\[
  \alpha_\TS = 1 \Big/ \sqrt{3 - \frac{1}{q}} = 0.5789\dotsc, \;\;
    q = 61, \;\; \text{if $\TS = \SHeven$},
\]
where $\SHeven := \set{m \in \SH : m \text{ is even}}$.
\end{theorem}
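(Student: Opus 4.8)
The plan is to reduce the estimate for each $\TS$ to a single inequality about base-$p$ digit sums together with a finite search. First I record the basic reduction. Since $m$ is squarefree and $p \mid m$, the integer $m/p$ is prime to $p$, so in base $p$ the digits of $m = p\cdot(m/p)$ are those of $m/p$ shifted up by one place, whence $s_p(m) = s_p(m/p)$. Thus the defining condition $s_p(m)\ge p$ of $\SH$ reads $s_p(m/p)\ge p$. As each base-$p$ digit is at most $p-1$, a number with digit sum $\ge p$ needs at least two nonzero digits, and the least such number has digits $(p-1,1)$ and value $2p-1$; hence
$$
  m/p \ge 2p-1 \qquad (m\in\SH,\ p\mid m).
$$
Writing $p/\sqrt m = \sqrt{p/(m/p)}$, this already gives the uniform inequality $p/\sqrt m \le 1/\sqrt{2-1/p}$, which decreases in $p$.

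The second step is a ``window'' argument that isolates the extremal configurations. Keep the bound $m/P \ge cP-1$ with $c=2$ in mind, and write $\alpha_\TS = 1/\sqrt{c-1/q}$. Since $p/\sqrt m$ is increasing in $p$, only the greatest prime factor $P$ of $m$ can be critical, and $P/\sqrt m > \alpha_\TS$ forces $m/P < (c-1/q)P = cP - P/q$. Together with $m/P \ge cP-1$ this confines $m/P$ to the half-open interval $[\,cP-1,\ cP-P/q\,)$, whose only integer point is $cP-1$ itself when $P<q$, which is empty when $P=q$ (then equality $P/\sqrt m=\alpha_\TS$ holds exactly for $m/P=cP-1$), and which is empty when $P>q$. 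Consequently $p\le\alpha_\TS\sqrt m$ holds for all $m\in\TS$ as soon as the single ``tight'' number $m=P(cP-1)$ fails to lie in $\TS$ for every prime $P<q$, while sharpness is witnessed by the tight number at $P=q$. Thus $q$ is the least prime $P$ for which $P(cP-1)$ belongs to $\TS$ and has $P$ as its largest prime factor (equivalently, $cP-1$ factors into primes below $P$). A direct check gives $q=11$, with $m=231=3\cdot7\cdot11$, for $\TS=\SH$, and $q=17$, with $m=561=3\cdot11\cdot17$, for $\TS=\CN$; here $231\notin\CN$ by Korselt's criterion, which is precisely why $q$ jumps from $11$ to $17$.

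The even case needs one extra parity observation, now with $c=3$. If $m$ is even and squarefree, Theorem~\ref{thm:main} shows that $m$ has at least four prime factors, so its largest prime factor $P$ is odd and $m/P$ is even. Since $P$ is odd we have $m/P\equiv s_P(m/P)\pmod 2$, so the digit sum $s_P(m/P)\ge P$ must be even, hence $\ge P+1$; the least number with digit sum $\ge P+1$ has digits $(P-1,2)$ and value $3P-1$, giving $m/P\ge 3P-1$ and the sharper bound $P/\sqrt m\le 1/\sqrt{3-1/P}$. The same window argument with $c=3$ then makes $q$ the least odd prime $P$ with $P(3P-1)\in\SHeven$, which turns out to be $q=61$, with extremal number $m=11102=2\cdot7\cdot13\cdot61$ and $m/P=182=3\cdot61-1$.

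The only real obstacle is the finite verification hidden in ``a direct check,'' which for $\TS=\CP$ is substantial because $q=66337$ is large: one must confirm that $P(2P-1)$ is not a primary Carmichael number for any prime $P<66337$, and then exhibit the sharp example $m=66337\cdot132673$ together with a verification that $132673$ splits into primes below $66337$ and that $m\in\CP$. This is carried out by a systematic computer search rather than by hand. I expect this search---and, to a lesser extent, the membership checks $231\notin\CN$ and $4070\notin\SH$ that force the jumps in $q$---to be where essentially all the work lies, the digit and window arguments themselves being elementary.
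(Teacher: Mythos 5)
Your proof is correct and follows essentially the same route as the paper: both reduce the problem to showing that the only possible extremal elements are the hexagonal numbers $\HN_p = p(2p-1)$ (and, in the even case, the quasi pentagonal numbers $2\PN_p = p(3p-1)$), and then locate the least such number in each set by the finite/computer search recorded in Table~\ref{tbl:poly-min}. Your \phrase{window} phrasing and the digit-sum-parity treatment of the even case are only minor repackagings of the paper's minimization of $a_1$ and then $a_0/P(m)$.
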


Interestingly, to achieve the nontrivial bounds in Theorem~\ref{thm:estimate},
in each of the sets $\SH$, $\CN$, and~$\CP$ we find certain \emph{polygonal numbers},
as discussed in Section~\ref{sec:polygonal} and Table~\ref{tbl:poly-min}.

It is not obvious from its definition that the set $\SH$ is infinite. However,
that is an immediate corollary of Theorem~\ref{thm:main} and the existence of
infinitely many Carmichael numbers. An independent proof showing directly that
$\SH$ is infinite, without involving the set $\CN$, would certainly be of interest.

\begin{corollary}
The set\, $\SH$ is infinite.
\end{corollary}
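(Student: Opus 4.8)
The plan is to read the result off directly from Theorem~\ref{thm:main}. That theorem establishes the chain $\CP \subset \CN \subset \SH \subset \SF$; in particular $\CN \subseteq \SH$. Since any set containing an infinite set is itself infinite, it suffices to know that the set $\CN$ of Carmichael numbers is infinite. This is precisely the theorem of Alford, Granville, and Pomerance~\cite{AGP:1994} quoted in the Introduction, namely that $C(x) > x^{2/7}$ for all sufficiently large $x$. Combining these two facts yields the corollary at once, with no further computation required.

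It is worth emphasizing what this short argument does and does not accomplish. It borrows the infinitude of $\SH$ entirely from the deep existence theorem for Carmichael numbers, and so reveals nothing about why the purely $p$-adic condition defining $\SH$, namely that $p \mid m$ should force $s_p(m) \geq p$, is satisfiable infinitely often in its own right. The genuinely harder and more interesting task, flagged just above the statement, is to give a direct construction: to exhibit an explicit infinite family of squarefree integers $m$ for which \emph{every} prime divisor $p$ satisfies $s_p(m) \geq p$ simultaneously, without passing through $\CN$. The main obstacle in that program is the coupling between the prime factorization of $m$ and the base-$p$ expansion of $m$ for each such $p$ at once; controlling all of these digit sums together, as $m$ grows, is exactly the kind of difficulty that makes the unconditional infinitude of $\CN$ a nontrivial theorem in the first place. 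For the present corollary, however, I would simply invoke $\CN \subseteq \SH$ together with $\lvert \CN \rvert = \infty$ and conclude.
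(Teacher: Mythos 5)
Your argument is exactly the paper's: the corollary is deduced from the inclusion $\CN \subset \SH$ in Theorem~\ref{thm:main} together with the Alford--Granville--Pomerance theorem that $\CN$ is infinite. Your additional remarks about the interest of a direct proof also mirror the discussion surrounding the corollary in the paper, so there is nothing to correct.
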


If one could show that $\CP$ is infinite, this would give not only a new proof
of the infinitude of Carmichael numbers, but also another proof that $\SH$ is
infinite.

\newpage

Let $\CPnum(x)$ and $S(x)$ count the numbers of elements of $\CP$ and $\SH$ less
than $x$, respectively. Table~\ref{tbl:distrib} reports the slow but steady
increase in size of $\CPnum(x)$ compared to $C(x)$ and $S(x)$.

\begin{table}[H] \small
\begin{center}
\begin{tabular}{lrrr}
  \toprule
  \multicolumn{1}{c}{$x$} & \multicolumn{1}{c}{$\CPnum(x)$} &
  \multicolumn{1}{c}{$C(x)$} & \multicolumn{1}{c}{$S(x)$}\\
  \midrule
  $10^3$ & $0$ & $1$ & $2$\\
  $10^4$ & $2$ & $7$ & $57$\\
  $10^5$ & $4$ & $16$ & $636$\\
  $10^6$ & $9$ & $43$ & $7048$\\
  $10^7$ & $19$ & $105$ & $75150$\\
  $10^8$ & $51$ & $255$ & $801931$\\
  $10^9$ & $107$ & $646$ & $8350039$\\
  $10^{10}$ & $219$ & $1547$ & \hspace*{0.2em}$86361487$\\
  \bottomrule
\end{tabular}

\caption{Distributions of $\CPnum(x)$, $C(x)$, and $S(x)$.}
\label{tbl:distrib}
\end{center}
\end{table}

For the values of $C(10^n)$ up to $n = 16$ and $n = 21$, as well as a more
detailed analysis of their distribution, see \cite{Granville&Pomerance:2002} and
Pinch \cite{Pinch:2007}, respectively. The primary Carmichael numbers with more
than three prime factors seem to occur \emph{rarely}. Indeed, up to $10^{10}$
there are only five elements of $\CP$ with four (but not more) prime factors.

Granville and Pomerance \cite{Granville&Pomerance:2002} gave a precise conjecture
that Carmichael numbers with exactly three prime factors should satisfy
\[
  C_3(x) = O(x^{1/3}/\log^3 x).
\]
Heath-Brown \cite{Heath-Brown:2007} showed the upper bound
$C_3(x) = O(x^{7/20+\varepsilon})$ for any fixed $\varepsilon > 0$.


\section{Bernoulli numbers and polynomials}
\label{sec:Bernoulli}

The \emph{Bernoulli polynomials} are defined by the generating function
\begin{alignat*}{3}
  \frac{t e^{xt}}{e^t - 1} &= \sum_{n \geq 0} B_n(x) \frac{t^n}{n!}
    \quad &&(|t| < 2\pi)\\
\shortintertext{where}
  B_n(x) &= \sum_{k=0}^{n} \binom{n}{k} B_k \, x^{n-k} \quad &&(n \geq 0)
\end{alignat*}
and $B_k = B_k(0) \in \QQ$ is the $k$th \emph{Bernoulli number}.

For $n \geq 1$ denote by $\D_n, \DD_n$, and $\DB_n$ the denominators
(see \cite{Kellner&Sondow:2018})
\begin{align*}
  \D_n  &:= \denom(B_n) = 2,6,1,30,1,42,1,30,1,66,\dotsc,\\
  \DD_n &:= \denom \bigl( B_n(x)-B_n \bigr) =  1,1,2,1,6,2,6,3,10,2,\dotsc,\\
  \DB_n &:= \denom \bigl( B_n(x) \bigr) = 2,6,2,30,6,42,6,30,10,66,\dotsc.
\end{align*}

The denominators of the Bernoulli numbers are well known by the
\emph{von Staudt--Clausen theorem} of $1840$ (see \cite{Clausen:1840,Staudt:1840})
to be
\begin{equation} \label{eq:denom-D}
  \D_n =
    \left\{
      \begin{array}{cl}
        2, & \text{if $n = 1$},\\
        1, & \text{if $n \geq 3$ is odd},\\
        \prod\limits_{p-1 \mids n} p, & \text{if $n \geq 2$ is even}.
      \end{array}
    \right.
\end{equation}

The initial connection between the Carmichael numbers and the denominators of
the Bernoulli numbers and polynomials results from the known relations
\begin{equation} \label{eq:CN-D-DB}
  m \in \CN \impliesq m \mid \D_{m-1} \mid \DB_{m-1}.
\end{equation}
The first relation, $m \in \CN \Rightarrow m \mid \D_{m-1}$, actually holds as
an equivalence: \emph{An odd composite number $m$ is a Carmichael number if and
only if $m$ divides $\D_{m-1}$} (see Pomerance, Selfridge, and
Wagstaff~\cite[p.\,1006]{PSW:1980}). The equivalence follows easily from
Korselt's criterion and the von Staudt--Clausen theorem.
The second relation, $\D_{m-1} \mid \DB_{m-1}$, is easily seen, since even
\begin{equation} \label{eq:DB-lcm-1}
  \DB_n = \lcm( \DD_n, \D_n )
\end{equation}
holds for all $n \geq 1$ (cf.~\cite[Thm.\,4]{Kellner&Sondow:2017}).
Now the sum-of-base-$p$-digits function $s_p$ comes into play, as follows.

The authors \cite{Kellner:2017,Kellner&Sondow:2017,Kellner&Sondow:2018} have
recently shown that the denominators of the Bernoulli polynomials $B_n(x)-B_n$
(which have no constant term) are given by the remarkable formula
\begin{equation} \label{eq:DD-prod}
  \DD_n = \prod_{s_p(n) \, \geq \, p} p
\end{equation}
in which the product is finite since $s_p(n) = n$ if $p > n$.
Moreover, the following relation, supplementary to \eqref{eq:DB-lcm-1},
holds for $n \geq 1$ (see \cite{Kellner&Sondow:2018}):
\begin{equation} \label{eq:DB-lcm-2}
  \DB_n = \lcm \bigl( \DD_{n+1}, \rad(n+1) \bigr)
\end{equation}
where $\rad(n) := \prod_{p \mids n} p$.

In particular, $\D_n$, $\DD_n$, and $\DB_n$ are squarefree. Furthermore,
these denominators obey the following properties (see \cite{Kellner&Sondow:2018}):
\begin{alignat}{3}
  \DD_n &= \lcm \bigl( \DD_{n+1}, \rad(n+1) \bigr), \quad
    &&\text{if $n \geq 3$ is odd}, \label{eq:DD-succ}\\
  \DB_n &= \lcm \bigl( \DB_{n+1}, \rad(n+1) \bigr), \quad
    &&\text{if $n \geq 2$ is even}, \label{eq:DB-succ}
\end{alignat}
and (see \cite{Kellner:2017})
\begin{equation} \label{eq:DD-rad}
  \rad(n+1) \mid \DD_n, \quad \text{if $n+1$ is composite}.
\end{equation}

To substantiate the relationship between the Carmichael numbers and the
Bernoulli polynomials, we introduce for $n \geq 1$ the decomposition
\begin{equation} \label{eq:decomp}
  \DD_n = \DDiv_n \,\cdot\, \DDivn_n
\end{equation}
where
\begin{equation} \label{eq:DDiv-def}
  \DDiv_n  := \prod_{\substack{p \mids n\\ s_p(n) \, \geq \, p}} p \andq
  \DDivn_n := \prod_{\substack{p \nmids n\\ s_p(n) \, \geq \, p}} p.
\end{equation}
Additionally, we define the \emph{complementary} number to $\DDiv_n$ for
$n \geq 1$ as
\begin{align}
  \DDivcomp_n := &\,\, \prod_{\substack{p \mids n\\ s_p(n) \, < \, p}} p,
    \label{eq:DDivcomp-def}\\
\shortintertext{which satisfies the relation}
  \rad(n) = &\,\, \DDiv_n \,\cdot\, \DDivcomp_n. \label{eq:rad-prod}
\end{align}

As an application of these definitions, the next theorem gives a complete
description of the structure of the denominator $\DB_n$ of the Bernoulli
polynomial $B_n(x)$ in terms of a decomposition of $\DB_n$ into three factors.
(The result may be compared to the von Staudt--Clausen theorem in \eqref{eq:denom-D},
which describes the structure of the denominator of the Bernoulli number $B_n$.)
Furthermore, we obtain for all squarefree numbers~$m > 1$ a generalization
of~\eqref{eq:CN-D-DB}, when omitting its middle term $\D_{m-1}$.

\begin{theorem} \label{thm:triple}
For $n \geq 1$ the denominator $\DB_n$ of the Bernoulli polynomial $B_n(x)$
splits into the \textbf{triple product}
\[
  \DB_n = \DDivn_{n+1} \,\cdot\, \DDiv_{n+1} \,\cdot\, \DDivcomp_{n+1}.
\]
Moreover,
\[
  m \in \SF \iffq m \mid \DB_{m-1}.
\]
\end{theorem}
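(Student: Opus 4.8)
The plan is to establish the triple product first and then read off the divisibility characterization as a corollary. For the triple product I would start from the identity \eqref{eq:DB-lcm-2}, which gives $\DB_n = \lcm(\DD_{n+1}, \rad(n+1))$. Substituting the decomposition \eqref{eq:decomp}, namely $\DD_{n+1} = \DDiv_{n+1} \cdot \DDivn_{n+1}$, together with \eqref{eq:rad-prod}, namely $\rad(n+1) = \DDiv_{n+1} \cdot \DDivcomp_{n+1}$, reduces the claim to evaluating $\lcm(\DDiv_{n+1}\,\DDivn_{n+1},\, \DDiv_{n+1}\,\DDivcomp_{n+1})$. Since each of these denominators is squarefree, the lcm is simply the product over the union of their prime supports. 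The decisive step is to observe that the three factors $\DDiv_{n+1}$, $\DDivn_{n+1}$, and $\DDivcomp_{n+1}$ have \emph{pairwise disjoint} prime supports: each prime $p$ is classified by the two independent conditions ``$p \mid (n+1)$ or not'' and ``$s_p(n+1) \geq p$ or not,'' and the primes with $s_p(n+1) \geq p$ split by divisibility into those landing in $\DDiv_{n+1}$ (with $p \mid n+1$) and those in $\DDivn_{n+1}$ (with $p \nmid n+1$), while $\DDivcomp_{n+1}$ collects exactly the remaining primes $p \mid n+1$ with $s_p(n+1) < p$. Hence the union of the prime supports of $\DD_{n+1}$ and $\rad(n+1)$ is precisely the disjoint union of the three supports, and the fourth class (primes with $p \nmid n+1$ and $s_p(n+1) < p$) contributes to neither factor, yielding $\DB_n = \DDivn_{n+1} \cdot \DDiv_{n+1} \cdot \DDivcomp_{n+1}$.

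For the characterization I would specialize $n = m-1$ in the triple product and recombine two factors via \eqref{eq:rad-prod}: since $\DDiv_m \cdot \DDivcomp_m = \rad(m)$, we obtain $\DB_{m-1} = \DDivn_m \cdot \rad(m)$. Because $\DB_{m-1}$ is squarefree, any divisor of it is squarefree, so $m \mid \DB_{m-1}$ forces $m$ to be squarefree; and conversely, when $m > 1$ is squarefree we have $m = \rad(m)$, whence $m = \rad(m) \mid \DDivn_m \cdot \rad(m) = \DB_{m-1}$. Combining the two directions gives the equivalence $m \in \SF \iff m \mid \DB_{m-1}$.

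The main obstacle is the disjointness observation underpinning the lcm computation; once that four-way classification of primes is in place, the rest is bookkeeping with squarefree supports. A minor point requiring explicit care in the second assertion is the edge case $m = 1$: here $1 \mid \DB_0 = 1$, yet $1 \notin \SF$, so the standing restriction $m > 1$ must be invoked to exclude it from the equivalence.
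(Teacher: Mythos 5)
Your proposal is correct and follows essentially the same route as the paper: both derive the triple product from \eqref{eq:DB-lcm-2} together with the decompositions \eqref{eq:decomp} and \eqref{eq:rad-prod}, with the key observation being the pairwise coprimality (disjoint prime supports) of $\DDiv_{n+1}$, $\DDivn_{n+1}$, and $\DDivcomp_{n+1}$, and both obtain the equivalence $m \in \SF \iffs m \mid \DB_{m-1}$ from $\rad(m) \mid \DB_{m-1}$ and the squarefreeness of $\DB_{m-1}$. Your extra care with the four-way classification of primes and the $m=1$ edge case only makes explicit what the paper leaves implicit.
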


The interplay of the three factors of $\DB_n$ instantly yields the two relations
\begin{equation} \label{eq:DB-rel}
  \DB_n = \DDivn_{n+1} \,\cdot\, \rad(n+1) = \DD_{n+1} \,\cdot\, \DDivcomp_{n+1}.
\end{equation}
Explicit product formulas for $\DB_n$, in the contexts of \eqref{eq:DB-lcm-1}
and~\eqref{eq:DB-rel}, are given in \cite[Thm.\,4]{Kellner&Sondow:2017} and
\cite[Cor.\,1]{Kellner&Sondow:2018}, respectively.

We can now state our second main result. It establishes a fundamental
relationship between the Bernoulli polynomials and the Carmichael numbers,
since $\CN \subset \SH$ by Theorem~\ref{thm:main}.

\begin{theorem} \label{thm:main2}
The following claims are true:
\begin{enumerate}
\item The sequence $(\DDiv_n)_{n \geq 1}$ contains all elements of $\SH$.
      More precisely,
      \[
        \qquad m \in \SH \cup \set{1} \iffq m = \DDiv_m \iffq m \mid \DD_m.
      \]
\item If $m+1$ is composite, then $\rad(m+1) \mid \DDivn_m$.
\end{enumerate}
\end{theorem}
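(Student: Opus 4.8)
The plan is to reduce both parts to the squarefreeness of the Bernoulli denominators together with a prime-by-prime reading of the product formulas \eqref{eq:DD-prod} and \eqref{eq:DDiv-def}. Throughout I would use that $\DD_m$ and $\DDiv_m$ are squarefree, and that $\DDiv_m \mid \rad(m) \mid m$, since every prime occurring in the product defining $\DDiv_m$ divides $m$.

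For part~(i) I would establish the two equivalences in turn. Since $\DDiv_m$ is squarefree and divides $m$, the equality $m = \DDiv_m$ forces $m$ to be squarefree, and then $\DDiv_m = \rad(m) = m$ holds precisely when every prime $p \mid m$ is actually included in the product, i.e.\ when $s_p(m) \geq p$ for all $p \mid m$. By the definition of $\SH$ (the condition being vacuous for $m = 1$, where $\DDiv_1 = 1$), this is exactly the statement $m \in \SH \cup \set{1}$. For the second equivalence, $m \mid \DD_m$ again forces $m$ squarefree because $\DD_m$ is squarefree; then comparing prime factors in $m = \rad(m)$ and $\DD_m = \prod_{s_p(m) \, \geq \, p} p$, the divisibility $m \mid \DD_m$ holds iff each prime $p \mid m$ satisfies $s_p(m) \geq p$---the same condition. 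This closes the cycle $m \in \SH \cup \set{1} \iff m = \DDiv_m \iff m \mid \DD_m$.

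For part~(ii) I would invoke \eqref{eq:DD-rad}, which gives $\rad(m+1) \mid \DD_m$ whenever $m+1$ is composite. Writing $\DD_m = \DDiv_m \cdot \DDivn_m$ as in \eqref{eq:decomp}, I would argue prime by prime: if $p \mid m+1$ then $p \nmid m$ by coprimality of consecutive integers, so $p$ cannot divide $\DDiv_m$, all of whose prime factors divide $m$; since $p \mid \rad(m+1) \mid \DD_m$, it must divide the complementary factor $\DDivn_m$. As $\rad(m+1)$ and $\DDivn_m$ are both squarefree, having every prime factor of $\rad(m+1)$ divide $\DDivn_m$ yields $\rad(m+1) \mid \DDivn_m$.

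The arguments are short, and the only nontrivial external input is \eqref{eq:DD-rad}, used in part~(ii); part~(i) is essentially definitional once squarefreeness is in hand. The one point to handle with care is the trivial case $m = 1$ in part~(i), where the relevant products are empty and one must check that all three conditions still agree. I expect no serious obstacle, the main subtlety being to keep the two directions of each equivalence organized around the single criterion that $s_p(m) \geq p$ for every $p \mid m$.
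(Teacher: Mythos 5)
Your proposal is correct and follows essentially the same route as the paper: part~(i) is read off from the product formulas \eqref{eq:DD-prod}, \eqref{eq:DDiv-def} and the decomposition \eqref{eq:decomp} together with squarefreeness, and part~(ii) combines \eqref{eq:DD-rad} with $\gcd(m,m+1)=1$ exactly as in the paper's proof. The only cosmetic difference is that you verify both equivalences in~(i) directly against the condition $s_p(m)\geq p$ for all $p \mid m$, whereas the paper chains them through $m=\DDiv_m$; this changes nothing of substance.
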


\begin{corollary} \label{cor:CN-DD}
The sequence $(\DDiv_n)_{n \geq 1}$ contains all the Carmichael numbers.
More precisely,
\[
  m \in \CN \impliesq m = \DDiv_m, \!\quad m \mid \DD_m, \!\quad
  m \mid \DDivn_{m-1}, \!\quad \!\text{and} \!\quad m \mid \DD_{m-1},
\]
but the converse does not hold.
\end{corollary}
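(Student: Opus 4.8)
The plan is to read off each of the four divisibility statements as a formal consequence of the two main theorems already in place, and then to defeat the converse by exhibiting an explicit element of $\SH \setminus \CN$. First, since $\CN \subset \SH$ by Theorem~\ref{thm:main}, any $m \in \CN$ lies in $\SH$. Feeding this into the chain of equivalences in Theorem~\ref{thm:main2}(i), namely $m \in \SH \cup \set{1} \iff m = \DDiv_m \iff m \mid \DD_m$, immediately delivers the first two assertions $m = \DDiv_m$ and $m \mid \DD_m$ at no cost.

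Next I would obtain $m \mid \DDivn_{m-1}$ by applying Theorem~\ref{thm:main2}(ii) with its running index set to $m-1$, so that the hypothesis becomes ``$m$ is composite'' and the conclusion becomes $\rad(m) \mid \DDivn_{m-1}$. A Carmichael number is composite by definition and squarefree by Theorem~\ref{thm:Carmichael}, so $\rad(m) = m$, and the third assertion follows. The fourth, $m \mid \DD_{m-1}$, is then free: the decomposition~\eqref{eq:decomp} gives $\DDivn_{m-1} \mid \DD_{m-1}$, whence $m \mid \DDivn_{m-1} \mid \DD_{m-1}$.

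The subtle point, and the only real content of the ``converse does not hold'' clause, is to notice that this entire argument used nothing about $m$ beyond membership in $\SH$: every $m \in \SH$ is squarefree and, by Theorem~\ref{thm:main}, has at least three prime factors, hence is composite, so all four conditions in fact hold for \emph{every} element of $\SH$. Since the inclusion $\CN \subset \SH$ is strict (again Theorem~\ref{thm:main}), any element of $\SH \setminus \CN$ refutes the converse; the smallest one is $231 = 3 \cdot 7 \cdot 11$, which appears in the list of $\SH$ in Section~\ref{sec:Carmichael} but fails Korselt's criterion since $7 - 1 \nmid 231 - 1$. The expected obstacle is therefore not in proving the four divisibilities, which are purely formal once Theorems~\ref{thm:main} and~\ref{thm:main2} are granted, but in recognizing that these divisibility conditions characterize the \emph{larger} set $\SH$ rather than $\CN$ itself. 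The genuinely sharper information that isolates $\CN$ inside $\SH$ is the congruence $s_p(m) \equiv 1 \pmod{p-1}$ of Theorem~\ref{thm:criterion}, which the divisibilities alone cannot detect.
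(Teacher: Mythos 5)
Your proof is correct and follows essentially the same route as the paper's: parts (i) and (ii) of Theorem~\ref{thm:main2} applied via $\CN \subset \SH$ and the compositeness/squarefreeness of Carmichael numbers, with the converse refuted by an element of $\SH \setminus \CN$ such as $231$. Your write-up merely spells out the details (the index shift $n = m-1$, $\rad(m)=m$, and $\DDivn_{m-1} \mid \DD_{m-1}$ from \eqref{eq:decomp}) that the paper leaves implicit.
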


\begin{remark}
The sequence $(\DDivn_n\mspace{-2mu})_{n \geq 1}$ very rarely intersects the
Carmichael numbers. Indeed, the only example below $10^6$ is
$\DDivn_{198} = 2465 \in \CN$.
\end{remark}

\begin{remark}
Comparing the definitions of the complementary numbers $\DDivcomp_n$ and the
set $\SH$, one immediately observes that the sequence $(\DDivcomp_n)_{n \geq 1}$
cannot contain any elements of $\SH$, and thus none of the Carmichael numbers.
Interestingly, it turns out that $(\DDivcomp_n)_{n \geq 1}$ is connected with
the quotients $\DD_n/\DD_{n+1}$ and $\DB_n/\DB_{n+1}$ (as introduced in
\cite{Kellner&Sondow:2018}), which are integral for odd and even indices~$n$
by~\eqref{eq:DD-succ} and~\eqref{eq:DB-succ}, respectively.
\end{remark}

\begin{remark}
It was actually the observation of the unexpected relationship
\[
  m \in \CN \impliesq m = \DDiv_m,
\]
as stated in Corollary~\ref{cor:CN-DD}, which led to the new characterization
of the Carmichael numbers via the sum-of-base-$p$-digits function $s_p$,
given in Theorem~\ref{thm:criterion}.
\end{remark}


\section{Polygonal numbers}
\label{sec:polygonal}

Surprisingly, the polygonal numbers (see \cite[Chap.\,XVIII]{Beiler:1966} and
\cite[pp.\,38--42]{Conway&Guy:1996}) are connected with the Carmichael numbers
and the set $\SH$.

Initially, we consider the following polygonal numbers for $n \geq 1$:
\[
  \PN_n = n(3n-1)/2, \quad \HN_n = n(2n-1), \quad \ON_n = n(3n-2),
\]
which are the $n$th \emph{pentagonal}, \emph{hexagonal}, and \emph{octagonal}
numbers, respectively. They satisfy an important property when $n=p$ is an
odd prime:
\[
  s_p(\HN_p) = s_p(\ON_p) = p \andq s_p(2\PN_p) = p+1.
\]

\newpage

To establish a connection between the set $\SH$ and the polygonal numbers,
we first introduce some definitions. Define $P(n)$ to be the
\emph{greatest prime factor} of $n$ if $n \geq 2$, and set \mbox{$P(1) := 1$}.
Also, denote the (double-shifted) $p$-adic value of $n$ by
\[
  \ell(n) := \floor*{\frac{n}{P(n)^2}} = \min_{p \mids n} \floor*{\frac{n}{p^2}}.
\]
We shall use the abbreviation $\ell = \ell(n)$ later on, if there is no
ambiguity in context. Finally, we need \emph{Legendre's formula}
(see \cite[Sec.\,5.3, p.\,241]{Robert:2000}), which gives the $p$-adic valuation
$\pval_p$ of a factorial by
\begin{equation} \label{eq:sp-fac}
  \pval_p( n! ) = \frac{n - s_p(n)}{p-1}.
\end{equation}

For simplicity, twice a polygonal number will be called a \emph{quasi} polygonal
number. The next theorem shows the special cases when $m \in \SH$ equals a
(quasi) polygonal number $\HN_p$, $\ON_p$, or $2\PN_p$ with $p = P(m)$,
the classification being determined by the parameter $\ell(m)$.

\begin{theorem} \label{thm:polygonal}
Let $m \in \SH$, and set $p = P(m)$ and $\ell = \ell(m)$.
Then the following statements hold:
\begin{enumerate}
\item We have $\ell \geq 1$.

\item There is the equivalence
\begin{align*}
  \ell = 1 &\iffq m = \HN_p \text{\ is a hexagonal number}.\\
\intertext{
\item There is the equivalence}
  \ell = 2 &\iffq m = \begin{cases}
    \ON_p, & \text{if $s_p(m) = p$},\\
    2 \PN_p, & \text{if $s_p(m) > p$}.
  \end{cases}
\end{align*}
In particular, for $m \in \CP$ we have $\ell = 2$ if and only if $m = \ON_p$
is an octagonal number.
\end{enumerate}
\end{theorem}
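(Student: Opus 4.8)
The plan is to compute everything directly from the base-$p$ expansion of $m$, where $p = P(m)$. Since $m \in \SH \subset \SF$ is squarefree, $p$ divides $m$ exactly once, i.e.\ $p \dmid m$, so the units digit of $m$ in base $p$ vanishes. Writing $\ell = \ell(m) = \intpart{m/p^2}$ and letting $r \in \set{0,1,\dots,p-1}$ be the coefficient of $p^1$, I would first record the two identities
$$
  m = \ell\,p^2 + r\,p \andq s_p(m) = r + s_p(\ell),
$$
the latter holding because in base $p$ the number $m$ has a zero units digit, the digit $r$ in the $p$-place, and the digits of $\ell$ thereafter. The defining property of $\SH$, namely $s_p(m) \geq p$ since $p \mid m$, is then available throughout, and this single inequality drives the entire argument.

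For part~(i), if $\ell = 0$ then $m = r\,p$ and $s_p(m) = r \leq p-1 < p$, contradicting $s_p(m) \geq p$; hence $\ell \geq 1$ (this also re-derives the bound $p < \sqrt{m}$ of Theorem~\ref{thm:main}). For parts~(ii) and~(iii) I would first note that $p \geq 3$ for every $m \in \SH$ (as $2 \notin \SH$), so $s_p(\ell) = \ell$ whenever $\ell \in \set{1,2}$. When $\ell = 1$ the inequality becomes $r + 1 \geq p$, forcing $r = p-1$ and hence $m = p^2 + (p-1)p = p(2p-1) = \HN_p$. When $\ell = 2$ it becomes $r + 2 \geq p$, forcing $r \in \set{p-2,\,p-1}$: the choice $r = p-2$ gives $s_p(m) = p$ and $m = 2p^2 + (p-2)p = p(3p-2) = \ON_p$, while $r = p-1$ gives $s_p(m) = p+1 > p$ and $m = 2p^2 + (p-1)p = p(3p-1) = 2\PN_p$. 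This matches the stated classification by the value of $s_p(m)$.

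Each converse is then a one-line check: substituting the explicit forms $p(2p-1)$, $p(3p-2)$, and $p(3p-1)$ back into $\ell = \intpart{m/p^2}$ returns $\ell = 1$, $2$, and $2$ respectively, and the compatibility $p = P(m)$ is guaranteed by hypothesis. The final \phrase{in particular} assertion for $m \in \CP$ is immediate, since there $s_p(m) = p$ excludes the $2\PN_p$ branch and leaves only $m = \ON_p$. I expect the only delicate point to be justifying the digit identity $s_p(m) = r + s_p(\ell)$ cleanly from $p \dmid m$; once that is in place, the remainder is a short finite case analysis.
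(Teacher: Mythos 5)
Your proposal is correct and follows essentially the same route as the paper: the paper writes $m/p = a_0 + \ell\,p$ with $1 \leq a_0 \leq p-1$ (your $r$) and uses $s_p(m) = a_0 + s_p(\ell) \geq p$ to force $a_0 = p-1$ when $\ell = 1$ and $a_0 \in \set{p-2, p-1}$ when $\ell = 2$, exactly as you do. The digit identity you flag as delicate is the same one the paper uses implicitly via $s_p(m) = s_p(m/p)$, and it is justified just as you describe.
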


As needed later, Table~\ref{tbl:poly-min} reports the first occurrences of the
polygonal numbers $\HN_p$ and $\ON_p$ in each of the sets $\SH$, $\CN$, and
$\CP$, as well as the first occurrence of $2\PN_p$ in $\SH$. In contrast to the
relatively small values in Table~\ref{tbl:poly-min}, the exceptionally large
number $8801128801$, which is indeed the least hexagonal number in $\CP$,
could be found only by a computer search.

\begin{table}[H] \small
\begin{center}
\begin{tabular}{ccc@{\hspace*{-0.1em}}ccc}
  \toprule
  \multicolumn{1}{c}{set} & \multicolumn{1}{c}{$m$} &
  \multicolumn{1}{c}{factors} & \multicolumn{1}{c}{$p=P(m)$} &
  \multicolumn{1}{c}{$\ell(m)$} & \multicolumn{1}{c}{number}\\
  \midrule
  $\SH$  & $231$ & $3 \cdot 7 \cdot 11$ & $11$ & $1$ & $\HN_{11}$\\
  $\CN$  & $561$ & $3 \cdot 11 \cdot 17$ & $17$ & $1$ & $\HN_{17}$\\
  $\CP$ & $8801128801$ & $181 \cdot 733 \cdot 66337$ & $66337$ & $1$ & $\HN_{66337}$\\
  \midrule
  $\SH$  & $1045$ & $5 \cdot 11 \cdot 19$ & $19$ & $2$ & $\ON_{19}$\\
  $\CN$  & $2465$ & $5 \cdot 17 \cdot 29$ & $29$ & $2$ & $\ON_{29}$\\
  $\CP$ & $2821$ & $7 \cdot 13 \cdot 31$ & $31$ & $2$ & $\ON_{31}$\\
  \midrule
  $\SH$ & $11102$ & $2 \cdot 7 \cdot 13 \cdot 61$ & $61$ & $2$ & $2\PN_{61}$\\
  \bottomrule
\end{tabular}

\caption{\parbox[t]{20.5em}{First occurrences of (quasi) polygonal numbers
$\HN_p$ and $\ON_p$ in $\SH$, $\CN$, and $\CP$, as well as $2\PN_p$ in $\SH$.}}
\label{tbl:poly-min}
\end{center}
\end{table}

To generalize the results, we further consider the polygonal numbers of
rank~$r \geq 3$, also called \emph{$r$-gonal numbers}, namely,
\[
  \GN^r_n = \frac{1}{2}(n^2(r-2) - n(r-4)),
\]
where $\PN_n = \GN^5_n$, $\HN_n = \GN^6_n$, and $\ON_n = \GN^8_n$. Note though
that an \mbox{$r$-gonal} number can also be an $r'$-gonal number with
$r \neq r'$; for instance, $\GN^6_n = \GN^3_{2n-1}$ for $n \geq 1$.
Note also that $\GN^r_1 = 1$ and $\GN^r_2 = r$ cover for $r \geq 3$ all positive
integers except $2$. For that reason, our results on polygonal numbers $\GN^r_n$
will implicitly involve only those with $n \geq 3$. Clearly, for fixed $n \geq 3$
the sequence of such numbers $(\GN^r_n)_{r\geq 3}$ is strictly increasing.

To extend the results of Theorem~\ref{thm:polygonal}, one may ask which
$m \in \SH$ are equal to a (quasi) polygonal number $\GN^r_p$ or
$2\GN^r_p$ for $p = {P(m)}$ and some $r \geq 3$. The example
\[
  m = 2145 = \HN_{33} = 3 \cdot 5 \cdot 11 \cdot 13 \in \SH \setminus \CN
\]
shows that  $m$ is indeed a polygonal number, but $m$ is not of the form
$\GN^r_p$ with $p = P(m) = 13$, as verified by the consecutive values
\[
  \GN^{29}_{13} = 2119 < m = 2145 < 2197 = \GN^{30}_{13}.
\]

The next theorem clarifies this situation by showing that an element
$m \in \SH$ equals a (quasi) polygonal number $\GN^r_p$ or $2\GN^r_p$
with $p = P(m)$ if and only if $s_p(m)$ satisfies certain conditions.

\begin{theorem} \label{thm:polygonal2}
Let $m \in \SH$, and set $p = P(m)$ and $\ell = \ell(m)$.
Further define the integers $\eta \geq 1$ and $0 \leq \mu < p-1$ satisfying
\begin{equation} \label{eq:lambda-mu}
  s_p(m) = \eta(p-1)+\mu.
\end{equation}
The number $m$ equals a (quasi) polygonal number $\GN^r_p$ or $2\GN^r_p$
for some $r \geq 3$ if and only if $\mu$ can be written as
\begin{equation} \label{eq:cond-mu}
  \mu = d + e \, \frac{p-1}{2} \withq
  (d,e) \in \set{(1,0),(1,1),(2,0)}.
\end{equation}
Then in these cases we have that
\begin{equation} \label{eq:poly-S}
  m = d \cdot \GN^r_p \withq
  r = \frac{2}{d}(\ell + \pval_p(\ell !) + \eta + d) + e.
\end{equation}
\end{theorem}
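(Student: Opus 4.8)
The plan is to reduce everything to the base-$p$ expansion of $m$ together with a single algebraic identity for $\GN^r_p$. Since $p = P(m)$ and $m$ is squarefree, I first write $m = pk$ with $p \nmid k$, and set $k_0 := k \bmod p \in \{1,\dots,p-1\}$ and $\ell = \ell(m) = \intpart{m/p^2} = \intpart{k/p}$, so that $k = \ell p + k_0$ and hence $m = \ell p^2 + k_0\,p$. Because the three contributions sit in disjoint digit positions (the units digit is $0$, the $p$-digit is $k_0 < p$, and the higher digits are those of $\ell$), there is no carrying, and reading off the digits gives the key identity $s_p(m) = s_p(\ell) + k_0$. Combining this with Legendre's formula \eqref{eq:sp-fac}, namely $\pval_p(\ell!) = (\ell - s_p(\ell))/(p-1)$, I obtain the compact expression $\ell + \pval_p(\ell!) = (\ell p - s_p(\ell))/(p-1)$, which is precisely the integer appearing in \eqref{eq:poly-S}.

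Next I treat both shapes uniformly. For $d \in \{1,2\}$, clearing the denominator in $\GN^r_p = \tfrac12(p^2(r-2) - p(r-4))$ turns $m = d\,\GN^r_p$ into a linear equation in $r$ whose unique solution is $r = (2m/d + 2p(p-2))/(p(p-1))$. Substituting $m = \ell p^2 + k_0\,p$, splitting $\ell p/(p-1) = \ell + \ell/(p-1)$, and then replacing $s_p(\ell) = s_p(m) - k_0$ together with $s_p(m) = \eta(p-1) + \mu$ from \eqref{eq:lambda-mu}, I find that $r$ equals the target value $\frac{2}{d}(\ell + \pval_p(\ell!) + \eta + d)$ of \eqref{eq:poly-S} plus a single residual term. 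Upon simplification this residual collapses to $\frac{2(\mu-1)}{p-1}$ when $d = 1$ and to $\frac{\mu-2}{p-1}$ when $d = 2$, so $r$ differs from the asserted formula only by this one rational quantity.

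The equivalence then amounts to deciding when that residual is a nonnegative integer $e$ matching \eqref{eq:cond-mu}. Since $0 \le \mu < p-1$ and $p$ is odd (every $m \in \SH$ has an odd largest prime factor), the quantity $\frac{2(\mu-1)}{p-1}$ lies strictly between $-1$ and $2$ and is an integer only for $\mu = 1$ (giving $e = 0$) or $\mu = 1 + (p-1)/2$ (giving $e = 1$), while $\frac{\mu-2}{p-1}$ lies strictly between $-1$ and $1$ and is an integer only for $\mu = 2$ (giving $e = 0$); these are exactly the three pairs $(d,e)$ of \eqref{eq:cond-mu}, and in each case $r$ reduces to \eqref{eq:poly-S}. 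Both directions are handled at once, because for any $m$ there is a unique real $r$ with $m = d\,\GN^r_p$, so $m$ being such a (quasi) polygonal number is the same as this $r$ being an integer $\ge 3$; the bound $r \ge 3$ is automatic, as $\ell \ge 1$ by part~(i) of Theorem~\ref{thm:polygonal} and $\eta \ge 1$ because $s_p(m) \ge p$ for $m \in \SH$. I expect the main obstacle to be the bookkeeping in the reduction of the second paragraph — keeping the integral part $\ell + \pval_p(\ell!)$ cleanly separated from the $\mu$-dependent residual — together with the range analysis that must rule out every spurious integer value of $r$, so that the three listed pairs are provably the only solutions.
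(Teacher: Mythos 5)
Your argument is correct and essentially the same as the paper's: both write $m = \ell p^2 + a_0 p$, use the carry-free identity $s_p(m) = a_0 + s_p(\ell)$ together with Legendre's formula, solve the linear equation $m = d\,\GN^r_p$ for the unique real $r$, and reduce its integrality to that of the residual $\frac{2}{d}\cdot\frac{\mu-d}{p-1}$, whose only admissible values give the three pairs $(d,e)$. The one slip is that your range analysis needs $p \geq 5$ rather than merely $p$ odd (at $p=3$, $\mu=0$ the $d=1$ residual equals $-1$); this is harmless, since every $m \in \SH$ has at least three distinct prime factors by Theorem~\ref{thm:main}, so $P(m) \geq 5$, which is exactly how the paper justifies the same step.
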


As an application we obtain the following corollary for the Carmichael numbers.

\begin{corollary} \label{cor:CN-poly}
All Carmichael numbers are polygonal numbers.
More precisely, if $m \in \CN$, $p = P(m)$, and \mbox{$\ell = \ell(m)$}, then
\begin{equation} \label{eq:CN-poly}
  m = \GN^r_p \withq
  r = 2(\ell + \pval_p(\ell !) + \eta + 1),
\end{equation}
where $\eta \geq 1$ is the integer satisfying $s_p(m) = \eta(p-1)+1$.
In particular, relation~\eqref{eq:CN-poly} holds with $\eta = 1$
for all primary Carmichael numbers $m \in \CP$.
\end{corollary}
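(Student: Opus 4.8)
The plan is to derive this corollary as a direct specialization of Theorem~\ref{thm:polygonal2}, using the characterization of Carmichael numbers furnished by Theorem~\ref{thm:criterion}. Since $\CN \subset \SH$ by Theorem~\ref{thm:main}, every $m \in \CN$ already falls under the hypotheses of Theorem~\ref{thm:polygonal2}, so all that remains is to identify the parameters $d$, $e$, $\mu$, and $\eta$ in the Carmichael case and to read off the formula.

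First I would fix $m \in \CN$ and put $p = P(m)$. Because $m$ is odd (Theorem~\ref{thm:Carmichael}, or Theorem~\ref{thm:main}), $p$ is an odd prime and $(p-1)/2$ is a positive integer, so the residues in \eqref{eq:cond-mu} make sense. The key input is Theorem~\ref{thm:criterion}: since $p \mid m$, we have $s_p(m) \equiv 1 \pmod{p-1}$. Writing $s_p(m) = \eta(p-1) + \mu$ with $0 \leq \mu < p-1$ as in \eqref{eq:lambda-mu}, this congruence forces $\mu = 1$. Moreover, $m \in \SH$ gives $s_p(m) \geq p > p-1$, whence $\eta \geq 1$.

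Next I would check that $\mu = 1$ matches \eqref{eq:cond-mu} with the pair $(d,e) = (1,0)$, and with this pair only: the choice $(1,1)$ would require $(p-1)/2 = 0$ and $(2,0)$ would require $\mu = 2$, both impossible. Thus the ``if'' direction of Theorem~\ref{thm:polygonal2} applies with $d = 1$ and $e = 0$, giving $m = 1 \cdot \GN^r_p = \GN^r_p$ and, from \eqref{eq:poly-S},
$$
  r = \frac{2}{1}\bigl(\ell + \pval_p(\ell!) + \eta + 1\bigr) + 0 = 2\bigl(\ell + \pval_p(\ell!) + \eta + 1\bigr),
$$
which is exactly \eqref{eq:CN-poly}. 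One should also confirm $r \geq 3$, but this is automatic: $\ell \geq 1$ by Theorem~\ref{thm:polygonal}(i), $\pval_p(\ell!) \geq 0$, and $\eta \geq 1$, so $r \geq 6$.

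For the final assertion about primary Carmichael numbers, I would simply note that if $m \in \CP$ then $s_p(m) = p$ for the prime $p = P(m)$ by definition of $\CP$, so $s_p(m) = (p-1) + 1$ yields $\eta = 1$ and $\mu = 1$ directly, and the same formula then holds with $\eta = 1$. The main point of the argument---and the reason there is no real obstacle here---is recognizing that the Carmichael congruence $s_p(m) \equiv 1 \pmod{p-1}$ pins the ``remainder'' $\mu$ to the single admissible value $\mu = 1$ among the three residue patterns allowed by \eqref{eq:cond-mu}; all the substantive work of expressing $m$ as a polygonal number has already been carried out in Theorem~\ref{thm:polygonal2}.
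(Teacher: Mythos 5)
Your proposal is correct and follows essentially the same route as the paper: invoke Theorem~\ref{thm:criterion} to get $s_p(m)=\eta(p-1)+1$ (hence $\mu=1$), note $\CN\subset\SH$ via Theorem~\ref{thm:main}, and apply Theorem~\ref{thm:polygonal2} with $(d,e)=(1,0)$, with $\eta=1$ for $m\in\CP$ by definition. The only difference is that you spell out details the paper leaves implicit (uniqueness of the admissible pair $(d,e)$ and the bound $r\geq 3$), which is harmless.
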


The first few numbers satisfying the conditions of Theorem~\ref{thm:polygonal}
are listed in Table~\ref{tbl:polygonal}. Additional numbers below $7000$
satisfying the conditions of Theorem~\ref{thm:polygonal2}, but not covered by
Theorem~\ref{thm:polygonal}, are listed in Table~\ref{tbl:polygonal2}.
As a special case, the taxicab number $1729$ equals the $12$--gonal number
$\GN^{12}_{19}$. Regarding Corollary~\ref{cor:CN-poly},
the first element of $\CN$ with $\eta = 2$ is
\[
  1050985 = 5 \cdot 13 \cdot 19 \cdot 23 \cdot 37 = \GN^{1580}_{37}.
\]

\begin{table}[H] \small
\begin{center}
\begin{tabular}{r@{$\;\in\;$}lcccc}
  \toprule
  \multicolumn{2}{c}{$m$} & \multicolumn{1}{c}{$p=P(m)$} &
  \multicolumn{1}{c}{$s_p(m)$} & \multicolumn{1}{c}{$\ell(m)$} &
  \multicolumn{1}{c}{number}\\
  \midrule
  $231$   & $\SH$ & $11$ & $11$ & $1$ & $\HN_{11}$\\
  $561$   & $\CN$ & $17$ & $17$ & $1$ & $\HN_{17}$\\
  $1045$  & $\SH$ & $19$ & $19$ & $2$ & $\ON_{19}$\\
  $2465$  & $\CN$ & $29$ & $29$ & $2$ & $\ON_{29}$\\
  $2821$  & $\CP$ & $31$ & $31$ & $2$ & $\ON_{31}$\\
  $3655$  & $\SH$ & $43$ & $43$ & $1$ & $\HN_{43}$\\
  $5565$  & $\SH$ & $53$ & $53$ & $1$ & $\HN_{53}$\\
  $8911$  & $\CN$ & $67$ & $67$ & $1$ & $\HN_{67}$\\
  $10585$ & $\CN$ & $73$ & $73$ & $1$ & $\HN_{73}$\\
  $11102$ & $\SH$ & $61$ & $62$ & $2$ & $2\PN_{61}$\\
  \bottomrule
\end{tabular}

\caption{\parbox[t]{16em}{The first (quasi) polygonal numbers
$\HN_p$, $\ON_p$, and $2\PN_p$ in $\SH$.}}
\label{tbl:polygonal}
\end{center}
\end{table}

\begin{table}[H] \small
\setstretch{1.25}
\begin{center}
\begin{tabular}{r@{$\;\in\;$}lcccc}
  \toprule
  \multicolumn{2}{c}{$m$} & \multicolumn{1}{c}{$p=P(m)$} &
  \multicolumn{1}{c}{$s_p(m)$} & \multicolumn{1}{c}{$\ell(m)$} &
  \multicolumn{1}{c}{number}\\
  \midrule
  $1105$ & $\CN$ & $17$ & $17$ & $3$  & $\GN^{10}_{17}$\\
  $1122$ & $\SH$ & $17$ & $18$ & $3$  & $2\HN_{17}$\\
  $1729$ & $\CP$ & $19$ & $19$ & $4$  & $\GN^{12}_{19}$\\
  $3458$ & $\SH$ & $19$ & $20$ & $9$  & $2\GN^{12}_{19}$\\
  $3570$ & $\SH$ & $17$ & $18$ & $12$ & $2\GN^{15}_{17}$\\
  $5005$ & $\SH$ & $13$ & $13$ & $29$ & $\GN^{66}_{13}$\\
  $5642$ & $\SH$ & $31$ & $32$ & $5$  & $2\ON_{31}$\\
  $6118$ & $\SH$ & $23$ & $24$ & $11$ & $2\GN^{14}_{23}$\\
  $6545$ & $\SH$ & $17$ & $17$ & $22$ & $\GN^{50}_{17}$\\
  $6601$ & $\CN$ & $41$ & $41$ & $3$  & $\GN^{10}_{41}$\\
  $6734$ & $\SH$ & $37$ & $38$ & $4$  & $2\GN^{7}_{37}$\\
  \bottomrule
\end{tabular}

\caption{\parbox[t]{17em}{Additional (quasi) polygonal numbers
$\GN^r_p$ and $2\GN^r_p$ in $\SH$ below $7000$.}}
\label{tbl:polygonal2}
\end{center}
\end{table}


\section{Proofs of Theorems \texorpdfstring{\ref{thm:main}, \ref{thm:criterion},
and \ref{thm:estimate}}{2.1, 2.2, and 2.3}}
\label{sec:proofs-1}

Recall the definitions and notation of Section~\ref{sec:polygonal}.
From Legendre's formula~\eqref{eq:sp-fac} one easily sees that
\begin{equation} \label{eq:sp-congr}
  n \equiv s_p(n) \pmod{p-1}.
\end{equation}

\begin{proof}[Proof of Theorem~\ref{thm:main}]
By the definitions and the computed examples,
we immediately obtain the strict inclusions $\CP \subset \SH \subset \SF$.

Given $m \in \SH$, we first show that $p \mid m$ implies $p < \sqrt{m}$.
As $m$ is squarefree, we can write
\begin{equation} \label{eq:padic-1}
  \frac{m}{p} = a_0 + a_1 \, p
\end{equation}
with $1 \leq a_0 \leq p-1$ and $a_1 \geq 0$. Since
\begin{equation} \label{eq:padic-2}
  a_0 + s_p(a_1) = s_p(m/p) = s_p(m) \geq p,
\end{equation}
we infer that $a_1 \geq 1$. Consequently, we obtain $a_0 + a_1 \, p > p$,
implying that $\sqrt{m} > p$. As a result, $m$ must have at least three
prime factors.

Now let $m$ be even. Suppose to the contrary that in this case $m$ has only
three prime factors. Hence we have
\begin{equation} \label{eq:m-even}
  m = 2 q p \withq p > q,
\end{equation}
where $p$ and $q$ are odd primes. By $s_p(2q) = s_p(m) \geq p$, we infer that
$2q \geq p$. Together with \eqref{eq:m-even} we then obtain that $2p > 2q > p$.
Using \eqref{eq:padic-1}, we conclude that $m / p = 2q = a_0 + a_1 \, p$ with
$a_1 = 1$. Since $a_0 = 2q - p < p-1$, it follows that $s_p(m) < p$, giving a
contradiction. Thus, if $m$ is even, then $m$ must have at least four prime
factors.

Next, we show that $\CN$ is equal to the set
\[
  \tilde{\SH} := \set{m \in \SH \,:\, p \mid m \impliesq s_p(m) \equiv 1 \pmod{p-1}}.
\]
Resolving the definition of $\SH$, for $m \in \tilde{\SH}$ we have the condition
\begin{equation} \label{eq:cond-1}
  p \mid m \impliesq s_p(m) \geq p \andq s_p(m) \equiv 1 \pmod{p-1}.
\end{equation}
Moreover, applying \eqref{eq:sp-congr} then yields
\begin{equation} \label{eq:congr-m-sp-1}
  m \equiv s_p(m) \equiv 1 \pmod{p-1}.
\end{equation}
For any $n \in \SF$, we have $n > 1$ is squarefree, so
\begin{equation} \label{eq:sp-1}
  s_p(n) = 1 \iffq n = p.
\end{equation}
By~\eqref{eq:congr-m-sp-1} and~\eqref{eq:sp-1}, condition~\eqref{eq:cond-1}
implies that
\begin{equation} \label{eq:cond-2}
  \text{$m$ is composite}, \andq p \mid m \impliesq p-1 \mid m-1.
\end{equation}
Thus $m$ satisfies Korselt's criterion. Hence, we conclude that
$\tilde{\SH} \subseteq \CN$.

Conversely, any $m \in \CN$ satisfies \eqref{eq:cond-2}. In view of
\eqref{eq:sp-congr}, we then have $s_p(m) \equiv 1 \pmod{p-1}$. Since $m$ is
squarefree and composite, from \eqref{eq:sp-1} we deduce that $s_p(m) \geq p$.
This implies that \eqref{eq:cond-1} holds, so $m \in \tilde{\SH}$ and
consequently $\CN \subseteq \tilde{\SH}$, proving that $\CN = \tilde{\SH}$.

Now, if $m \in \CP$, then \eqref{eq:cond-1} holds, so $m \in \CN$.
Considering the computed examples again, we finally deduce that
$\CP \subset \CN \subset \SH \subset \SF$.
This completes the proof of the theorem.
\end{proof}

\begin{proof}[Proof of Theorem~\ref{thm:criterion}]
The first statement is the equality $\CN = \tilde{\SH}$, established in the
proof of Theorem~\ref{thm:main}. Since $m \in \tilde{\SH}$ if and only if
\eqref{eq:cond-1} holds, the second statement then follows.

Moreover, Theorem~\ref{thm:main} also implies by $\CN \subset \SH$ that any
$m \in \CN$ has at least three prime factors, each satisfying $p < \sqrt{m}$.
As $m$ is composite and squarefree, an odd prime $p$ divides $m$.
Using \eqref{eq:sp-congr}, we then get relation \eqref{eq:congr-m-sp-1},
so $p-1 \mid m-1$, whence $m$ is odd.
\end{proof}

\begin{proof}[Proof of Theorem~\ref{thm:estimate}]
Consider a non-empty subset $\TS \subseteq \SH$ and define
\[
  \alpha_\TS := \sup_{m \, \in \, \TS} \frac{P(m)}{\sqrt{m}},
\]
where $\alpha_\TS \leq 1$ by Theorem~\ref{thm:main}.
Clearly, this definition includes for any $m \in \TS$ that
\begin{equation} \label{eq:estim-alpha}
  p \mid m \impliesq p \leq \alpha_\TS \, \sqrt{m},
\end{equation}
but it suffices to study the case where $p = P(m)$ is the greatest prime divisor
of $m$. To show that the estimate in \eqref{eq:estim-alpha} is sharp, we further
have to find an explicit $m' \in \TS$ such that $\alpha_\TS = P(m') / \sqrt{m'}$
holds.

Now, let $m \in \TS$. In view of \eqref{eq:padic-1} and \eqref{eq:padic-2},
we obtain by~\eqref{eq:estim-alpha} that
\begin{equation} \label{eq:estim-alpha2}
  \frac{1}{\alpha_\TS^2} \leq \frac{m}{P(m)^2} = \frac{a_0}{P(m)} + a_1
\end{equation}
with $1 \leq a_0 \leq P(m)-1$ and $a_1 \geq 1$. Thus, we are interested in
finding firstly a minimal number $a_1$, and secondly a minimal fraction
$a_0/P(m) \in (0,1)$. If they exist, then $\alpha_\TS$ is determined.

Next, we assume that there exists an element $m \in \TS$ with $a_1 = 1$.
(This is true for the sets of interest $\TS=\SH,\CN,\CP$.) From now on,
let $p = P(m)$. Since $m \in \SH$, we have the condition
$s_p(m) = a_0 + a_1 \geq p$, so $a_0 = p-1$.
Then \eqref{eq:estim-alpha2} becomes
\begin{equation} \label{eq:estim-alpha3}
  \frac{1}{\alpha_\TS^2} \leq \frac{m}{p^2} = \frac{p-1}{p} + 1 = 2 - \frac{1}{p}.
\end{equation}
Hence, to determine a minimal $\alpha_\TS$, we also have to determine a minimal~$p$
satisfying \eqref{eq:estim-alpha3}. Since $p = P(m)$ and $m = p(2p-1)$,
the factor~$p$ strictly increases with $m$. As a consequence, we can identify
the aforementioned element $m'$ as the minimal element $m' \in \TS$ for which
$a_1 = 1$. Finally, we achieve that
\[
  \alpha_\TS = 1 \Big/ \sqrt{2 - \frac{1}{P(m')}}.
\]

Now we use a link to the polygonal numbers. Since
\begin{equation} \label{eq:m-HN}
  m' = p(2p-1) = \HN_p,
\end{equation}
we have to find the least hexagonal number $\HN_p$ in each of the sets
$\TS = \SH, \CN, \CP$. This is done in Table~\ref{tbl:poly-min},
providing the solutions
\[
  P(m') = 11, 17, 66337 \quad \text{for} \quad \TS = \SH, \CN, \CP,
\]
respectively.

There remains the case when $m \in \SHeven$. For this purpose, let
$\TS = \SHeven$  with $m \in \TS$ and $p = P(m)$. Note that $p$ is odd,
since $m$ is composite. We adapt and reuse the arguments that lead
to~\eqref{eq:estim-alpha2} and~\eqref{eq:estim-alpha3}.
By~\eqref{eq:estim-alpha2} we have to find again a minimal $a_1 \geq 1$.
The case $a_1 = 1$ implies \eqref{eq:m-HN} and so an odd $m = \HN_p$ for odd~$p$.
Therefore, we show that case $a_1 = 2$ works, as follows.
By $s_p(m) = a_0 + a_1 \geq p$, we obtain two solutions $a_0 = p-2$ and
$a_0 = p-1$. Since $a_0 = p-2$ implies $m = p(3p-2) = \ON_p$, being odd for
odd $p$, there remains the case $a_0 = p-1$. Then we get $m = p(3p-1) = 2\PN_p$,
which is always even. Similar to~\eqref{eq:estim-alpha3}, we deduce that
\[
  \frac{1}{\alpha_\TS^2} \leq \frac{m}{p^2} = \frac{p-1}{p} + 2 = 3 - \frac{1}{p}.
\]

To find the minimal element $m' \in \TS$ with $a_1 = 2$, we have to find the
least quasi pentagonal number $2\PN_p$ in $\TS$. Table~\ref{tbl:poly-min} shows
that \mbox{$P(m') = 61$}. With that we finally obtain
\[
  \alpha_\TS = 1 \Big/ \sqrt{3 - \frac{1}{P(m')}}.
\]
This completes the proof of the theorem.
\end{proof}


\section{Proofs of Theorems \texorpdfstring{\ref{thm:triple} and \ref{thm:main2}}
{3.1 and 3.2} and Corollary \texorpdfstring{\ref{cor:CN-DD}}{3.3}}
\label{sec:proofs-2}

\begin{proof}[Proof of Theorem~\ref{thm:triple}]
From relations \eqref{eq:DB-lcm-2} and \eqref{eq:rad-prod} we get
\[
  \DB_n = \lcm \bigl( \DD_{n+1}, \DDiv_{n+1} \cdot \DDivcomp_{n+1} \bigr),
\]
and the decomposition~\eqref{eq:decomp} gives
$\DD_{n+1} = \DDiv_{n+1} \cdot \DDivn_{n+1}$.
Since $\DDiv_{n+1}$, $\DDivn_{n+1}$, and $\DDivcomp_{n+1}$ are pairwise coprime
by the definitions in \eqref{eq:DDiv-def} and \eqref{eq:DDivcomp-def},
the desired triple product formula follows.

If $m \in \SF$, then $m = \rad(m) >1$. By relation~\eqref{eq:DB-lcm-2},
we then have $m \mid \DB_{m-1}$.
Conversely, if $m \mid \DB_{m-1}$, then $m>1$ is squarefree, so $m \in \SF$.
This proves the required equivalence and completes the proof of the theorem.
\end{proof}

\begin{proof}[Proof of Theorem~\ref{thm:main2}]
We have to show two parts:

(i).~It suffices to prove the second statement. The definitions of $\SH$
and $\DDiv_n$ yield immediately that $m \in \SH \cup \set{1}$ if and only if
$\DDiv_m = m$. Since $\DDiv_n \mid \DD_n$ by \eqref{eq:decomp}, we have that
$\DDiv_m = m$ implies $m \mid \DD_m$. Conversely, if $m \mid \DD_m$, then
$\DDiv_m = m$ by~\eqref{eq:DD-prod} and~\eqref{eq:decomp}. This proves~(i).

(ii).~If $m+1$ is composite, then we have by~\eqref{eq:DD-rad}
and~\eqref{eq:decomp} that
\[
  \rad(m+1) \mid \DD_m = \DDiv_m \cdot \DDivn_m.
\]
Since $\gcd(m,m+1) = 1$, we infer by~\eqref{eq:DDiv-def} that
$\rad(m+1) \mid \DDivn_m$, proving~(ii).
\end{proof}

\begin{proof}[Proof of Corollary~\ref{cor:CN-DD}]
The implication follows from Theorem~\ref{thm:main2} parts (i) and~(ii),
using the strict inclusion $\CN \subset \SH$ and the compositeness of Carmichael
numbers. The converse does not hold, by Theorem~\ref{thm:main2} part~(i) and
considering $\SH \setminus \CN$.
\end{proof}


\section{Proofs of Theorems \texorpdfstring{\ref{thm:polygonal} and \ref{thm:polygonal2}}
{4.1 and 4.2} and Corollary \texorpdfstring{\ref{cor:CN-poly}}{4.3}}
\label{sec:proofs-3}

\begin{proof}[Proof of Theorem~\ref{thm:polygonal}]
Fix $m \in \SH$ and set $p = P(m)$ and $\ell = \ell(m)$.
We have to show three parts:

(i).~As $m$ is squarefree, we obtain
\begin{equation}\label{eq:padic-m-l}
  \frac{m}{p} = a_0 + a_1 \, p = a_0 + \ell \, p,
\end{equation}
where $1 \leq a_0 \leq p-1$ and $a_1 \geq 0$.
The case $a_1 = 0$ would imply $s_p(m) = s_p(m/p) = a_0 < p$.
Since $s_p(m) \geq p$ by $m \in \SH$, we must have $\ell = a_1 \geq 1$.
We shall use \eqref{eq:padic-m-l} implicitly in the remaining parts.

(ii).~If $\ell = 1$, then $s_p(m) = a_0+1 \geq p$, since $m \in \SH$.
But $a_0 \leq p-1$, so $a_0 = p-1$. Thus $m = p(p-1+p)= \HN_p$.
Conversely, if $m = \HN_p$, then $\ell = a_1 = 1$.

(iii).~Assume that $s_p(m) = p$. If $\ell = 2$, then $a_0+2 = s_p(m) = p$,
so $a_0 = p-2$ and $m = p(p-2+2p)= \ON_p$.
Conversely, if $m = \ON_p$, then $\ell = a_1 = 2$.

In particular, it then follows for $m \in \CP$ that $\ell = 2$ if and only if
$m = \ON_p$, since $s_p(m) = p$ by the definition of $\CP$.

Assume now that $s_p(m) > p$. If $\ell = 2$, then $a_0+2 = s_p(m) > p$,
so $a_0 > p-2$. But $a_0 \leq p-1$, so $a_0 = p-1$ and $m = p(p-1+2p) = 2 \PN_p$.
Conversely, if $m = 2 \PN_p$, then $\ell = a_1 = 2$.
This proves the theorem.
\end{proof}

\begin{proof}[Proof of Theorem~\ref{thm:polygonal2}]
Fix $m \in \SH$ and set $p = P(m)$ and $\ell = \ell(m)$. Since $s_p(m) \geq p$,
we can determine the integers $\eta \geq 1$ and $0 \leq \mu < p-1$
satisfying~\eqref{eq:lambda-mu}. Again, as in \eqref{eq:padic-m-l} we have
$m/p = a_0 + \ell \, p$, where $1 \leq a_0 \leq p-1$ and $\ell \geq 1$.
Using \eqref{eq:lambda-mu} we then obtain
\[
  \frac{m}{p} = \eta(p-1)+\mu - s_p(\ell) + \ell \, p.
\]
Now let $d \in \set{1,2}$. Resolving the desired equality
\begin{align*}
  d \cdot \GN^r_p &= m\\
\shortintertext{yields the equation}
  \frac{d}{2}(p^2(r-2) - p(r-4)) &= p \, (\eta(p-1)+\mu - s_p(\ell) + \ell \, p)
\end{align*}
with solution
\begin{equation} \label{eq:sol-r}
  r = \frac{2}{d} \left( \ell + \frac{\ell - s_p(\ell)}{p-1} + \eta + d + \frac{\mu - d}{p-1} \right).
\end{equation}
By Legendre's formula~\eqref{eq:sp-fac} we have
\begin{equation} \label{eq:pval-ell}
  \frac{\ell - s_p(\ell)}{p-1} = \pval_p(\ell!).
\end{equation}
Since $d \mid 2$ and $r \in \ZZ$, formulas \eqref{eq:sol-r}
and~\eqref{eq:pval-ell} imply the condition
\begin{equation} \label{eq:e-tilde}
  \tilde{e} := \frac{2}{d} \cdot \frac{\mu - d}{p-1} \in \ZZ.
\end{equation}

Since $m \in \SH$ has at least three prime factors by Theorem~\ref{thm:main},
we have $p = P(m) \geq 5$. This and the fact that $0 \leq \mu < p-1$ allow us
to continue deriving solutions of~\eqref{eq:e-tilde} for $\mu$, as follows.

In case $d = 2$, we infer that $\mu = 2$ and $\tilde{e} = 0$. In case $d=1$,
we get the solutions $\mu = 1$ and $\tilde{e} = 0$, as well as
$\mu = 1 + (p-1)/2$ and $\tilde{e} = 1$. One easily observes that all solutions
of~\eqref{eq:e-tilde} for $\mu$, $d$, and $\tilde{e}$ coincide with
condition~\eqref{eq:cond-mu} when taking $e = \tilde{e}$. Finally,
relation~\eqref{eq:poly-S} with $e = \tilde{e}$ follows from~\eqref{eq:sol-r}
by considering~\eqref{eq:pval-ell} and~\eqref{eq:e-tilde}.
This completes the proof of the theorem.
\end{proof}

\begin{proof}[Proof of Corollary~\ref{cor:CN-poly}]
If $m \in \CN$, then $s_p(m) = \eta(p-1)+1$ with $\eta \geq 1$ by
Theorem~\ref{thm:criterion}. In particular, if $m \in \CP$, then $\eta = 1$ by
definition of the set $\CP$. Since $\CN \subset \SH$ by Theorem~\ref{thm:main},
relation~\eqref{eq:CN-poly} follows by applying Theorem~\ref{thm:polygonal2}
with parameters $(d,e) = (1,0)$.
\end{proof}


\section{Modular properties of the set \texorpdfstring{$\SH$}{S}}
\label{sec:modular}

Define for a positive integer $d$ the subset $\SH_d$ of $\SH$ by
\[
  \SH_d := \set{m \in \SH \,:\, p \mid m \impliesq s_p(m) \equiv d \pmod{p-1}}.
\]
Theorem~\ref{thm:criterion} shows that $\SH_1 = \CN$. Thus the sets $\SH_d$ can
be viewed as a generalization, with the Carmichael numbers as a special case.
The first terms of the sets $S_d$ for $d = 1,2,3$ are
(compare Table~\ref{tbl:values})
\begin{align*}
  \SH_1 &= \set{561, 1105, 1729, 2465, 2821, 6601, 8911, 10585, 15841, \dotsc},\\
  \SH_2 &= \set{1122, 3458, 5642, 6734, 11102, 13202, 17390, 17822, \dotsc},\\
  \SH_3 &= \set{3003, 3315, 5187, 7395, 8463, 14763, 19803, 26733, \dotsc}.
\end{align*}

Let $\varphi$ denote \emph{Euler's totient function}.
The \emph{Carmichael function}~$\lambda$ (see \cite{Carmichael:1910})
is defined for $m = p_1^{e_1} \dotsm p_k^{e_k}$ with $p_1 < \dotsm < p_k$ by
\[
  \lambda(m) = \lcm( \lambda(p_1^{e_1}), \dotsc, \lambda(p_k^{e_k})),
\]
where $\lambda(p^e) = \delta \, \varphi(p^e)$ with $\delta = \frac{1}{2}$
if $p=2$ and $e \geq 3$, otherwise $\delta = 1$.

For positive integers $m$ the Carmichael function $\lambda$ has the property that
\begin{equation} \label{eq:lambda-congr}
  a^{\lambda(m)} \equiv 1 \pmod{m}
\end{equation}
holds for all integers $a$ coprime to $m$, where $\lambda(m)$ is the smallest
possible positive exponent. Since \eqref{eq:lambda-congr} generalizes the
Euler--Fermat congruence, it follows that $\lambda(m)$ divides $\varphi(m)$.
Moreover, for $m \in \SH$ we have the relation
\begin{equation} \label{eq:lambda-S-1}
  \lambda(m) = \lcm(p_1-1, \dotsc, p_k-1),
\end{equation}
where
\begin{equation} \label{eq:lambda-S-2}
  m = p_1 \dotsm p_k \geq 231 \quad \text{and} \quad k \geq 3.
\end{equation}

Define the \emph{function}~$\rho$ for positive integers $m$ by
\[
  \rho(1)=0, \quad \rho(2)=1,
\]
and
\begin{equation} \label{eq:rho-def}
  \rho(m) \equiv m \pmod{\lambda(m)} \quad (m \geq 3),
\end{equation}
being the least positive residue.

In view of \eqref{eq:lambda-congr} and \eqref{eq:rho-def}
the Fermat congruence~\eqref{eq:Fermat-congr} can be restated
for $m \geq 1$ in the form
\[
  a^{m-\rho(m)} \equiv 1 \pmod{m},
\]
holding for all integers $a$ coprime to $m$. As a special case, one has
\[
  \rho(m) = 1 \iffq \text{$m$ is prime} \text{ or } m \in \CN,
\]
which Carmichael proved with $m \equiv 1 \pmod {\lambda(m)}$ in place of
\mbox{$\rho(m) = 1$}.

Moreover, since $\lambda(m)$ is even for $m \geq 3$ by construction,
we have the parity relation
\begin{equation} \label{eq:rho-par}
  \rho(m) \equiv m \pmod{2} \quad (m \geq 3).
\end{equation}

\begin{table}[H] \small
\begin{center}
\begin{tabular}{c|*{9}{r}}
  \toprule
  $m$ & $231$ & $561$ & $1001$ & $1045$ & $1105$ & $1122$ & $1155$ & $1729$ & $2002$\\
  $\rho(m)$ & $21$ & $1$ & $41$ & $145$ & $1$ & $2$ & $15$ & $1$ & $22$\\
  $\lambda(m)$ & $30$ & $80$ & $60$ & $180$ & $48$ & $80$ & $60$ & $36$ & $60$\\
  \bottomrule
\end{tabular}

\caption{First values of $\rho(m)$ and $\lambda(m)$ for $m \in \SH$.}
\label{tbl:values}
\end{center}
\end{table}
\vspace*{-4ex}

\begin{theorem} \label{thm:rho}
If $m \in \SH$, then $\rho(m)$ equals the least positive index
$d < \lambda(m)$ such that $m \in \SH_d$. Moreover, we have
\[
  m \in \SH_{d \,+\, j \, \lambda(m)} \quad (j \in \ZZ_{\geq 0}).
\]
\end{theorem}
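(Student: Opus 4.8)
The plan is to translate membership in $\SH_d$ into a single congruence modulo $\lambda(m)$. Fix $m \in \SH$ with $m = p_1 \cdots p_k$ and let $d$ be a positive integer. By the definition of $\SH_d$, we have $m \in \SH_d$ precisely when $s_p(m) \equiv d \pmod{p-1}$ for every prime $p \mid m$. First I would invoke the congruence \eqref{eq:sp-congr}, namely $s_p(m) \equiv m \pmod{p-1}$, to replace each condition $s_p(m) \equiv d$ by the equivalent $m \equiv d \pmod{p-1}$. Thus $m \in \SH_d$ if and only if $(p_i - 1) \mid (m - d)$ for all $i$, which by the elementary lcm property is the same as $\lcm(p_1 - 1, \dots, p_k - 1) \mid (m - d)$. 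Now \eqref{eq:lambda-S-1} identifies this lcm with $\lambda(m)$ (valid since $m \in \SH$ has at least three prime factors), so the whole condition collapses to $m \equiv d \pmod{\lambda(m)}$; by the definition \eqref{eq:rho-def} of $\rho$ this reads $d \equiv \rho(m) \pmod{\lambda(m)}$.

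Granting this equivalence, both assertions follow at once. The positive integers $d$ with $m \in \SH_d$ are exactly those in the residue class of $\rho(m)$ modulo $\lambda(m)$, that is, the progression $\rho(m), \rho(m) + \lambda(m), \rho(m) + 2\lambda(m), \dots$; this already yields the ``Moreover'' statement that $m \in \SH_{d + j\lambda(m)}$ for all $j \in \ZZ_{\ge 0}$, since adding a multiple of $\lambda(m)$ preserves the residue. The first assertion then amounts to showing that $\rho(m)$ is the smallest positive member of this progression lying strictly below $\lambda(m)$, which is immediate once we know that $\rho(m) < \lambda(m)$, equivalently that $\lambda(m) \nmid m$.

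The one point that needs real work, and where I expect the main obstacle to lie, is therefore $\lambda(m) \nmid m$. For odd $m$ it is trivial, since $\lambda(m)$ is even while $m$ is odd (compare the parity relation \eqref{eq:rho-par}). In general I would argue by descent on the prime factorization. If $\lambda(m) \mid m$, then for each $p_i$ we have $(p_i - 1) \mid \lambda(m) \mid m$, so every prime dividing $p_i - 1$ is one of the $p_j$ with $j < i$, and squarefreeness of $m$ forces each $p_i - 1$ to be squarefree with prime factors drawn only from the strictly smaller $p_j$. Running this from the bottom up pins the primes down uniquely: $p_1 - 1$ admits no prime factor, so $p_1 = 2$; then $p_2 - 1 \mid 2$ gives $p_2 = 3$; then $p_3 - 1 \mid 6$ gives $p_3 = 7$; then $p_4 - 1 \mid 42$ gives $p_4 = 43$; and $p_5 - 1 \mid 1806$ admits no prime exceeding $43$, so the chain terminates. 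Hence $m$ would be an initial product of this Sylvester-type sequence, that is, $m \in \set{2, 6, 42, 1806}$. But none of these lies in $\SH$: the first three are below the least element $231$ of $\SH$, and $1806 = 42 \cdot 43$ fails the defining inequality because $s_{43}(1806) = 42 < 43$. This contradiction establishes $\lambda(m) \nmid m$ for every $m \in \SH$, so $\rho(m) < \lambda(m)$, completing the argument.
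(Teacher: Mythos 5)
Your proof is correct and follows essentially the same route as the paper: both arguments translate $m \in \SH_d$ into the single congruence $d \equiv m \equiv \rho(m) \pmod{\lambda(m)}$ by combining \eqref{eq:sp-congr} with \eqref{eq:lambda-S-1}, and then read off both claims from the resulting arithmetic progression of admissible indices $d$. Your extra verification that $\lambda(m) \nmid m$ (hence $\rho(m) < \lambda(m)$), via the Sylvester-type descent forcing $m \in \set{2,6,42,1806}$ and checking none of these lies in $\SH$, is sound and supplies a detail the paper's proof leaves implicit.
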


\begin{proof}
Given $m \in \SH$, factor $m = p_1 \dotsm p_k$ and consider by
\eqref{eq:lambda-S-2} and~\eqref{eq:rho-def} the congruences
\[
  d \equiv m \equiv \rho(m) \pmod{\lambda(m)}.
\]
From \eqref{eq:sp-congr} and \eqref{eq:lambda-S-1}, we further deduce the
system of congruences
\[
  d \equiv m \equiv s_{p_\nu}(m) \pmod{p_\nu-1} \quad (\nu = 1, \dotsc, k).
\]
Thus, $d = \rho(m) < \lambda(m)$ is the least positive index such that $m \in \SH_d$.
Moreover, it also follows that $m \in \SH_{d \,+\, j \, \lambda(m)}$ for $j \geq 1$.
\end{proof}

Define the \emph{$d$-Kn\"odel numbers} $\KK_d$ (see \cite{Knoedel:1953}) to be
the set of composite integers $m > d$ such that
\begin{equation} \label{eq:Kd-def}
  a^{m-d} \equiv 1 \pmod{m}
\end{equation}
holds for all integers $a$ coprime to $m$. (Note that the usual but equivalent
definition is further restricted to $1 < a < m$.) For example,
the \mbox{$1$-Kn\"odel} numbers are the Carmichael numbers: $\KK_1 = \CN$.
For $d=2,3$ the $d$-Kn\"odel numbers are
\begin{align*}
  \KK_2 &= \set{4, 6, 8, 10, 12, 14, 22, 24, 26, 30, 34, 38, 46, 56, 58, 62, 74, \dotsc},\\
  \KK_3 &= \set{9, 15, 21, 33, 39, 51, 57, 63, 69, 87, 93, 111, 123, 129, 141, \dotsc}.
\end{align*}

Makowski \cite{Makowski:1962} showed that each of the sets $\KK_d$ for
$d \geq 2$ is infinite. More precisely, for given $d \geq 2$ he proved the
existence of infinitely many primes $p > d$ such that
(see \cite[pp.\,125--126]{Ribenboim:2012})
\begin{equation} \label{eq:Kd-elem}
  dp \in \KK_d.
\end{equation}

Our final theorem shows properties of the sets $\SH_d$, as well as a connection
with generalizations of the sets $\KK_d$. Avoiding the restriction $m > d$ on
numbers $m \in \KK_d$, we define the \emph{superset} $\KE_d$ of $\KK_d$ to be
all composites $m > 1$ satisfying \eqref{eq:Kd-def} for all $a$ coprime to $m$.
Note that $\KK_1 = \KE_1$ and, in case $d$ is composite, $d \in \KE_d$.

\begin{theorem} \label{thm:cover}
The following statements hold:

\begin{enumerate}
\item We have $\SH_1 = \KK_1 = \CN$ and $\SH_d \subset \KE_d$ for $d \geq 2$.
\item All elements of $\SH_d$ have the same parity as $d$ for $d \geq 1$.
\item A cover of the set $\SH$ is
\[
  \SH = \bigcup_{d \, \geq \, 1} \SH_d.
\]
\end{enumerate}
\end{theorem}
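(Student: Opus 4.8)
The plan is to handle the three parts in turn, the common thread being the translation between the digit-sum congruences that define $\SH_d$ and ordinary divisibility statements about $m-d$, effected by the congruence $m \equiv s_p(m) \pmod{p-1}$ of \eqref{eq:sp-congr}. For part (i), the equalities $\SH_1 = \CN$ and $\KK_1 = \CN = \KE_1$ are immediate, the former from Theorem~\ref{thm:criterion} and the latter from the definition of the $1$-Knödel numbers. To prove the inclusion $\SH_d \subset \KE_d$ for $d \geq 2$, I would take $m \in \SH_d$ and factor $m = p_1 \cdots p_k$; by Theorem~\ref{thm:main} this $m$ is squarefree, composite, and greater than $1$, so it is an admissible candidate for membership in $\KE_d$. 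For each prime factor $p_\nu$ the defining condition $s_{p_\nu}(m) \equiv d \pmod{p_\nu - 1}$ combines with \eqref{eq:sp-congr} to give $m \equiv d \pmod{p_\nu - 1}$, that is, $p_\nu - 1 \mid m - d$. Passing to the least common multiple over $\nu$ and invoking \eqref{eq:lambda-S-1} yields $\lambda(m) \mid m - d$, whence $a^{m-d} \equiv 1 \pmod{m}$ for every $a$ coprime to $m$ by \eqref{eq:lambda-congr}. Thus $m \in \KE_d$. I would emphasize that one is forced into the superset $\KE_d$ rather than $\KK_d$, since an element of $\SH_d$ may violate the size restriction $m > d$: indeed $m$ lies in $\SH_{d + j\lambda(m)}$ for every $j \geq 0$ by Theorem~\ref{thm:rho}, so arbitrarily large $d$ occur.

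For part (ii), I would use that every $m \in \SH \supseteq \SH_d$ has at least three prime factors by Theorem~\ref{thm:main}, hence at least one odd prime factor $p$. For such a $p$ the modulus $p-1$ is even, so reducing the two congruences $m \equiv s_p(m) \pmod{p-1}$ and $s_p(m) \equiv d \pmod{p-1}$ modulo $2$ gives $m \equiv d \pmod 2$; that is, $m$ and $d$ have the same parity.

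For part (iii), the inclusion $\bigcup_{d \geq 1} \SH_d \subseteq \SH$ is immediate from the definition of $\SH_d$ as a subset of $\SH$. For the reverse inclusion I would appeal directly to Theorem~\ref{thm:rho}: given $m \in \SH$, that theorem produces the least positive index $d = \rho(m) < \lambda(m)$ with $m \in \SH_d$, so $m$ lies in the union. I expect the only genuinely delicate point to be the bookkeeping in part (i) — namely keeping the passage from the per-prime divisibilities $p_\nu - 1 \mid m - d$ to the single condition $\lambda(m) \mid m - d$ clean, and making explicit why $\KE_d$, and not $\KK_d$, is the correct target — whereas parts (ii) and (iii) reduce to the evenness of $p-1$ and to Theorem~\ref{thm:rho}, respectively.
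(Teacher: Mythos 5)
Your arguments for the containment $\SH_d \subseteq \KE_d$, for part (ii), and for part (iii) are correct and essentially the paper's: the paper routes the congruence bookkeeping through Theorem~\ref{thm:rho} (obtaining $d \equiv \rho(m) \equiv m \pmod{\lambda(m)}$ and then invoking \eqref{eq:lambda-congr}), whereas you re-derive $p_\nu - 1 \mid m-d$ prime by prime from \eqref{eq:sp-congr} and pass to the $\lcm$ via \eqref{eq:lambda-S-1}; these are the same computation. Likewise your part (ii) uses the evenness of $p-1$ for a single odd prime factor where the paper uses the evenness of $\lambda(m)$ via \eqref{eq:rho-par} --- an immaterial difference. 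Your observation that the restriction $m > d$ forces the target to be $\KE_d$ rather than $\KK_d$ is also apt.

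However, there is a genuine gap in part (i): the theorem asserts the \emph{strict} inclusion $\SH_d \subset \KE_d$ (the paper consistently uses $\subset$ for proper inclusion, as in Theorem~\ref{thm:main}), and you only establish $\SH_d \subseteq \KE_d$; you never exhibit an element of $\KE_d \setminus \SH_d$. The paper closes this by appealing to Makowski's result \eqref{eq:Kd-elem}: for each $d \geq 2$ there is a prime $p > d$ with $m' = dp \in \KK_d \subseteq \KE_d$, and since $s_p(m') = s_p(dp) = d < p$, this $m'$ fails the defining condition of $\SH$, so $m' \notin \SH \supseteq \SH_d$ and hence $\SH_d \neq \KE_d$. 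Note that this step cannot be skipped even in degenerate cases --- for a given $d$ the set $\SH_d$ could a priori be empty, and strictness then still requires producing some element of $\KE_d$ --- so you need an input like \eqref{eq:Kd-elem} here; the digit-sum machinery alone does not supply it.
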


\begin{proof}
We have to show three parts:

(i).~We have $\SH_1 = \KK_1 = \CN$ by definition. Fix $d \geq 2$.
If $m \in \SH_d$, then Theorem~\ref{thm:rho} implies that
$d \equiv \rho(m) \pmod{\lambda(m)}$.
By \eqref{eq:lambda-S-2} and \eqref{eq:rho-def} this translates to
$d \equiv m \pmod{\lambda(m)}$. Finally, \eqref{eq:lambda-congr} and
\eqref{eq:Kd-def} imply that $m \in \KE_d$.
This shows that $\SH_d \subseteq \KE_d$.

By~\eqref{eq:Kd-elem} there exists a prime $p > d$ such that
$m' = dp \in \KK_d \subseteq \KE_d$. Since $s_p(m') = d < p$,
it follows that $m' \notin \SH$.
This implies that $\SH_d \neq \KE_d$, and finally $\SH_d \subset \KE_d$.

(ii).~Fix $d \geq 1$ and $m \in \SH_d$. As in part~(i) we have
$d \equiv \rho(m) \equiv m \pmod{\lambda(m)}$.
By~\eqref{eq:rho-par} the result follows.

(iii).~Set $\mathcal{U} = \bigcup_{d \, \geq \, 1} \SH_d$. Since
$\SH_d \subset \SH$ for $d \geq 1$, it follows that $\mathcal{U} \subseteq \SH$.
By Theorem~\ref{thm:rho} we obtain for any $m \in \SH$ an index $d = \rho(m)$
such that $m \in \SH_d$. As a consequence, $\SH \subseteq \mathcal{U}$ and
finally $\SH = \mathcal{U}$.
\end{proof}


\end{document}